\documentclass[a4paper,12pt]{article}

\usepackage{graphicx,fancyhdr,mathrsfs}
\usepackage{amsfonts,amsmath,amssymb,amsthm,mathtools}
\usepackage{multicol,multirow}
\usepackage{url}

\usepackage[colorlinks=true,citecolor=blue,urlcolor=blue,linkcolor=blue]{hyperref}
\usepackage{indentfirst}
\usepackage{color,xcolor,comment}
\usepackage[authoryear,round]{natbib}
\usepackage[toc,page]{appendix}
\usepackage{subfig}
\usepackage[top=20mm,bottom=30mm,left=20mm,right=20mm]{geometry}
\usepackage{enumitem}
\usepackage{array,booktabs}
\usepackage{csquotes}
\usepackage[compact]{titlesec}
\usepackage{bm,bbm}
\usepackage[onehalfspacing]{setspace}

\setlist{
  align=left,
  labelindent=0mm,
  leftmargin=!,
  itemindent=0mm,
  listparindent=\parindent,
  parsep=0mm,
  topsep=1mm,
  itemsep=1mm
}

\theoremstyle{definition}
\newtheorem{theorem}{Theorem}[section]
\newtheorem{proposition}[theorem]{Proposition}
\newtheorem{lemma}[theorem]{Lemma}
\newtheorem{corollary}[theorem]{Corollary}
\newtheorem{definition}[theorem]{Definition}
\newtheorem{example}[theorem]{Example}
\newtheorem{remark}[theorem]{Remark}

\newcommand{\R}{\mathbb{R}}
\newcommand{\E}{\mathbb{E}}

\newcommand{\Cop}{\mathcal{C}}
\newcommand{\dd}{\,\mathrm{d}}
\newcommand{\eps}{\varepsilon}

\newcommand{\sgn}{\operatorname{sign}}
\newcommand{\Meas}{\mathfrak{M}}
\newcommand{\id}{\mathbbm{1}}

\title{A Characterization of Measures of Concordance\\ of Degree Two}

\author{
  Takaaki Koike\thanks{\protect\linespread{1}\protect\selectfont Corresponding author.}\,\,\thanks{\protect\linespread{1}\protect\selectfont
    Graduate School of Economics, Hitotsubashi University, 2-1, Naka, Kunitachi, Tokyo 186-8601, Japan.
    Email: \texttt{takaaki.koike@r.hit-u.ac.jp}}\, and
  Haruki Tsunekawa\thanks{\protect\linespread{1}\protect\selectfont
    Graduate School of Economics, Hitotsubashi University, 2-1, Naka, Kunitachi, Tokyo 186-8601, Japan.
    Email: \texttt{em265009@g.hit-u.ac.jp}}
}

\begin{document}

\maketitle

\begin{abstract}
We characterize bivariate measures of concordance of degree at most two in the sense that the measures evaluated at copula-mixture segments are polynomials of degree at most two.
Our main device is the canonical polarization, which converts quadraticity of a functional into separate affinity, thereby allowing an integral representation for affine functionals to be applied sectionwise.
Using this technique, we prove that measures of degree at most two are in bijection with
copula-indexed measure fields satisfying several properties including square-group equivariance. The field at independence determines the affine
linearization, while its displacement determines the purely quadratic
part.
Moreover, exact degree two is equivalent to field non-constancy. 
Leveraging our characterization result, we provide a construction of degree-two measures of concordance based on affine copula operators that
are square-group equivariant.
As a more concrete example, we derive a necessary and sufficient condition for a weighted averaging operator of square-group transformed copulas to generate a measure of concordance of degree two.
\end{abstract}

\noindent\emph{MSC classification:}
62H20; % Measures of association
62H05; % Characterization and structure theory
60E15. % Inequalities; stochastic orderings
\\
\noindent\emph{Keywords:}
copula; 
degree; 
Kendall's tau;
measure of concordance;
Spearman's rho.

\section{Introduction}

A bivariate measure of concordance assigns a real number to a copula in a
way that respects the concordance order and the symmetries of the unit
square. The axiomatic properties are presented in~\citet{scarsini1984}; standard
references include \citet{nelsen2006} and \citet{durantesempi2016}.
Classical examples include Spearman's rho, Blomqvist's beta, Gini's gamma,
and Kendall's tau. 
Their behavior under copula-mixtures provides a structural distinction: the first three are affine, whereas Kendall's tau is quadratic.

For the affine case, where such measures of concordance are said to be of \emph{degree one}, \citet{edwardstaylor2009} established various characterizations of all such measures.
One such characterization is based on square-group invariant measures as follows:
\begin{align}\label{eq:intro:deg:1:characterization}
    \kappa(C)=\int_{[0,1]^2}(C-\Pi)\,\dd
\nu,\qquad C\in \Cop,
\end{align}
where $\kappa$ is a measure of concordance, $\Cop$ is the set of all bivariate copulas and $\nu$ is a measure on $[0,1]^2$ satisfying some invariant properties.
Related affine
classes include the copula-induced measures of
\citet{fuchs2016induced} and the correlation-based measures characterized
by \citet{hofertkoike2019,koike2023matrix}.

Quadratic measures of concordance are said to have \emph{degree two}, where Kendall's tau is the primary example.
Other instances can be found, for example, in~\citet{manstavicius2022,manstavicius2024,manstavicius2025} and the mutual
association measures of~\citet{borroni2019}.
In contrast to the case of degree-one, a complete characterization of measures of concordance of degree two is not known in the literature to the best of our knowledge.
This characterization problem is posed in~\citet{taylor2007}, and
 revisited, for example, by~\citet{edwardstaylor2009,manstavicius2022,manstavicius2025}.
 
We address this problem via what we call the \emph{canonical polarization}:
\begin{align*}
  b_\kappa(C,D)
  &=
  2\kappa\left(\frac{C+D}{2}\right)
  -\frac{\kappa(C)+\kappa(D)}{2},\qquad C,D \in \Cop,
\end{align*}
where we show its properties and connections to measures of concordance in Section~\ref{sec:polarization}.
In particular, the canonical polarization $b_{\kappa}$ captures the structure of the associated measure of concordance of degree two via $b_\kappa(C,C)
  =
  \kappa(C)$ and 
\begin{align*}
  \kappa((1-t)C+tD)
  &=
  (1-t)^2\kappa(C)
  +2t(1-t)b_\kappa(C,D)
  +t^2\kappa(D),\qquad t\in[0,1],
\end{align*}
where $C,D\in\Cop$.
In addition, the canonical polarization converts quadraticity of $\kappa$ into separate affinity, which allows an integral representation of affine functionals to be applied sectionwise.
Technically, all axiomatic properties and quadraticity of a measure of concordance are equivalently stated by the form $b:\Cop\times \Cop\to \R$.
In this context, our main characterization is stated by the canonical polarization as follows:
\begin{align}\label{eq:intro:characterization}
  b(C,D)
  =
  \int_{(0,1)^2}(C-\Pi)\,\dd\nu_\Pi
  +
  \int_{(0,1)^2}(D-\Pi)\,\dd\nu_C,
\end{align}
where $\nu=(\nu_C)_{C\in \Cop}$ is a measure field satisfying several properties including square-group equivariance.
More precisely, every measure of concordance of degree at most two has canonical polarization of the form~\eqref{eq:intro:characterization}.
Conversely, the canonical polarization~\eqref{eq:intro:characterization} yields a measure of concordance of degree at most two.
Moreover, as seen by comparing~\eqref{eq:intro:characterization} with~\eqref{eq:intro:deg:1:characterization}, the measure is degree one if and only if the field is constant : $\nu_C =(1/2)\,\nu$ for all $C\in \Cop$.
For Kendall's
tau, the corresponding field is $\nu_C^\tau=4\mu_C$, where $\mu_C$ is the measure associated with $C$.

To leverage the characterization~\eqref{eq:intro:characterization}, we provide a construction of measures of concordance of degree two based on  the biconvex form of copulas $Q(C,D)
  =
  4\int_{[0,1]^2}C\,\dd\mu_D-1$ and a copula operator $\Theta:\Cop\to \Cop$.
As a more concrete construction, we consider the class of copula-to-copula operators given by weighted averages of all square-group transforms of a copula.
We then derive a necessary and sufficient condition on the weights of transforms such that the averaging operator generates a measure of concordance of degree two.

The remainder of the paper is organized as follows.
Section~\ref{sec:preliminaries} reviews square-group action, measures
of concordance and the notion of degree.
Section~\ref{sec:main:characterization} develops the canonical polarization
and the characterization by measure fields.
Section~\ref{sec:constructions} then presents the copula-operator construction and studies the weighted averaging operator.
Section~\ref{sec:conclusion} concludes. All proofs are collected in the
Appendix.

\section{Preliminaries}\label{sec:preliminaries}

Let $\Cop$ denote the set of all bivariate copulas, equipped throughout
with the uniform topology. It is a compact convex subset of the set of continuous functions on
$[0,1]^2$. 
% On $\Cop$, pointwise and uniform convergence are equivalent, and they are also equivalent to weak convergence of the associated copula measures; see~\citet{durantesempi2016}. 
We write $M$, $W$, and $\Pi$ for
the comonotone, countermonotone, and independence copulas, respectively.
The pointwise order on $\Cop$ is the bivariate concordance order and is
denoted by $\preceq$.

Let $\mathcal D_4$ be the symmetry group of $[0,1]^2$ generated by the coordinate transposition $\pi(u,v)=(v,u)$ and the first-coordinate reflection $\sigma_1(u,v)=(1-u,v)$.
Namely, 
\begin{align*}
    \mathcal{D}_4 &= \{ \mathrm{id},\, \pi,\, \sigma_1,\, \pi \circ \sigma_1 \circ \pi,\, \sigma_1 \circ \pi \circ \sigma_1 \circ \pi, \,\sigma_1 \circ \pi, \,\pi \circ \sigma_1, \,\sigma_1 \circ \pi \circ \sigma_1 \}\\
    &=
\{ \mathrm{id},\, \pi, \,\sigma_1, \,\sigma_2, \,\sigma_1 \circ \sigma_2,\, \sigma_1 \circ \pi,\, \pi \circ \sigma_1,\, \pi \circ \sigma_1 \circ \sigma_2 \},
\end{align*}
where $\sigma_2=\pi\circ\sigma_1\circ\pi$.
We also write $\varrho(u,v)=(1-u,1-v)=\sigma_1\circ \sigma_2$ in short.

%new
For $\gamma\in\mathcal D_4$, we define the copula $C^\gamma$
via the push-forward measure as
$\mu_{C^\gamma}=\gamma_\#\mu_C$, where
$\gamma_\#\mu_C(A)=\mu_C(\gamma^{-1}(A))$ for
$A\in\mathcal B([0,1]^2)$.
This is a continuous affine action on $\Cop$.  
The induced action satisfies $C^{\mathrm{id}}
  =
  C$ and 
  $
  (C^{\gamma_1})^{\gamma_2}
  =
  C^{\gamma_2\circ\gamma_1}$ for
$C\in\Cop$ and $\gamma_1,\gamma_2\in\mathcal D_4$.
Note that $C^\varrho(u,v)=u+v-1+C(1-u,1-v)$ is the survival copula of $C$.
For convenience, we introduce the map $\eps:\mathcal D_4\rightarrow\{-1,1\}$ determined  by $\eps(\pi)=1$ and $\eps(\sigma_1)=-1$ through $\eps(\gamma_1\circ \gamma_2)=\eps(\gamma_1)\,\eps( \gamma_2)$, $\gamma_1,\gamma_2\in\mathcal D_4$.
% Whenever $\mu_C$ is regarded as a measure on $(0,1)^2$, we identify it with its restriction to that open square; note that $\mu_C(\partial[0,1]^2)=0$ for the boundary mass.

We next provide the axiomatic definition of a measure of concordance following~\citet{scarsini1984}.

\begin{definition}[Measure of concordance]\label{def:moc}
A functional $\kappa:\Cop\to\R$ is a \emph{measure of concordance} if the following properties hold.
\begin{enumerate}[label=\textup{(A\arabic*)},leftmargin=*,widest=A5]
  \item\label{item:pi:sym} $\kappa(C^\pi)=\kappa(C)$ for every $C\in\Cop$.
  \item\label{item:monotone} $C\preceq D$ implies $\kappa(C)\leq\kappa(D)$.
  \item\label{item:M} $\kappa(M)=1$.
  \item\label{item:sigma:sym} $\kappa(C^{\sigma_1})=-\kappa(C)$ for every $C\in\Cop$.
  \item\label{item:conv} $C_n\to C$ uniformly implies $\kappa(C_n)\to\kappa(C)$.
\end{enumerate}
\end{definition}
Note that~\ref{item:pi:sym} and~\ref{item:sigma:sym} are equivalently summarized by $\kappa(C^\gamma)=\eps(\gamma)\kappa(C)$, $\gamma\in \mathcal D_4$.
Since $\Pi^{\sigma_1}=\Pi$, every measure of concordance satisfies $\kappa(\Pi)=0$.

Following the polynomial-type classification of
\citet{edwardstaylor2009}, we next define the degree of measures of concordance.

\begin{definition}[Degree]\label{def:mixture-degree}
A functional $\kappa:\Cop\to\R$ is said to have \emph{degree} at most two if, for every $C,D\in\Cop$, the map $ t\mapsto\kappa((1-t)C+tD)$, $ 0\leq t\leq1$, agrees on $[0,1]$ with a real polynomial of degree at most two.
We say that $\kappa$ has \emph{exact degree two} if $\kappa$ has degree at most two but is not affine on $\Cop$.
\end{definition}

\section{The characterization with measure-field}
\label{sec:main:characterization}

Our characterization is based on the canonical polarization, which is introduced in Section~\ref{sec:polarization}.
Section~\ref{sec:characterization:field} then provides the main characterization result with measure-field.
We also give
examples of corresponding measure-fields for existing measures of concordance of degree at most two.

\subsection{Canonical polarization}\label{sec:polarization}

Our characterization is based on the following object.

\begin{definition}[Canonical polarization]\label{def:canonical:polarization}
 For a functional $\kappa:\Cop\to\R$, we call
\begin{align*}
% \label{eq:main-polarization}
  b_\kappa(C,D)
  =2\kappa\left(\frac{C+D}{2}\right)
    -\frac{\kappa(C)+\kappa(D)}{2}
\end{align*}   
the \emph{canonical polarization} of $\kappa$.
\end{definition}

Properties of measures of concordance can be translated into those of canonical polarizations.
 We first establish the result on polarization of linewise polynomiality.  
Below, we say that a map $b:\Cop\times\Cop\to\R$ is \emph{(jointly) continuous} if it is continuous with respect to (w.r.t.) the product uniform topology.

\begin{proposition}[Canonical polarization]\label{prop:canonical-polarization}
Let $\kappa:\Cop\to\R$ be a continuous functional of degree at most two. 
Let $b_\kappa$ be the canonical polarization in Definition~\ref{def:canonical:polarization}.
Then the following properties hold.
\begin{enumerate}[label=\textup{(\roman*)},leftmargin=*,widest=ii]
  \item $b_\kappa$ is jointly continuous, symmetric, and affine in each argument.
  \item $b_\kappa(C,C)=\kappa(C)$ for every $C\in\Cop$.
  \item For every $C,D\in\Cop$ and $t\in[0,1]$,
  \begin{align}\label{eq:quadratic-segment}
    \kappa((1-t)C+tD)
    &=(1-t)^2\kappa(C)
      +2t(1-t)b_\kappa(C,D)
      +t^2\kappa(D).
  \end{align}
  \item $b_\kappa$ is the unique symmetric separately affine map on $\Cop\times\Cop$ whose diagonal is $\kappa$.
\end{enumerate}
\end{proposition}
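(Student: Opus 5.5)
The plan is to begin with (iii), since the other items follow from it. Fix $C,D\in\Cop$. By Definition~\ref{def:mixture-degree}, $p(t)=\kappa((1-t)C+tD)$ agrees on $[0,1]$ with a polynomial of degree at most two. Any such polynomial can be written in the Bernstein basis as
\begin{align*}
p(t)=(1-t)^2\alpha+2t(1-t)\beta+t^2\gamma .
\end{align*}
Evaluating at $t=0$ and $t=1$ gives $\alpha=\kappa(C)$ and $\gamma=\kappa(D)$. Evaluating at $t=1/2$ gives
\begin{align*}
\kappa\bigl(\tfrac{C+D}{2}\bigr)=\tfrac14\kappa(C)+\tfrac12\beta+\tfrac14\kappa(D),
\end{align*}
hence $\beta=b_\kappa(C,D)$. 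This proves \eqref{eq:quadratic-segment}. Item (ii) follows directly, since $b_\kappa(C,C)=2\kappa(C)-\kappa(C)$. Symmetry is immediate from the definition. Joint continuity holds because $(C,D)\mapsto (C+D)/2$ is continuous in the product uniform topology and $\kappa$ is continuous.

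The substantive step is separate affinity in (i). Fix $D$ and $C_0,C_1\in\Cop$, and put $C_s=(1-s)C_0+sC_1$. I want to show that $s\mapsto b_\kappa(C_s,D)$ is affine on $[0,1]$. Every copula in the triangle $\operatorname{conv}\{C_0,C_1,D\}$ can be written as $x C_0+yC_1+zD$ with $x+y+z=1$ and $x,y,z\ge0$. I first claim that $F(x,y,z)=\kappa(xC_0+yC_1+zD)$ agrees on this simplex with a quadratic polynomial. To see this, let $q$ be the quadratic form with $q(e_1)=\kappa(C_0)$, $q(e_2)=\kappa(C_1)$, $q(e_3)=\kappa(D)$, and with mixed coefficients $b_\kappa(C_0,C_1)$, $b_\kappa(C_0,D)$, $b_\kappa(C_1,D)$. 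By (iii), $F=q$ on each edge of the simplex. Now take any segment from the vertex $D$ to a point $C_s$ on the opposite edge. Along it, $F$ is quadratic by hypothesis, and so is $q$. The two quadratics agree at both endpoints.

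This leaves one missing parameter on each such segment, and I expect that to be the main obstacle: matching the endpoint values does not pin down a quadratic. I would get around it by comparing on a different family of lines. Take lines parallel to the edge $[C_0,C_1]$, each of which meets the two other edges. On every such line $F$ and $q$ are both quadratics, and they agree at the two endpoints. Restrict to the line through the barycentre-like points where the medians cross, and use the segment family from each of the three vertices. A cleaner route is to observe that $G=F-q$ vanishes on the boundary of the triangle and is quadratic along every line through the triangle. A function that is quadratic on every line in an open convex planar set is a polynomial of degree at most two in $(x,y)$; this is a Fréchet-type result, and continuity, which we have, makes it standard. A bivariate quadratic vanishing on the three sides of a triangle must vanish identically, because the product of three distinct linear forms has degree three. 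Hence $F=q$ on the triangle. Expanding $q$ at the midpoint of $C_s$ and $D$ then gives $b_\kappa(C_s,D)=(1-s)b_\kappa(C_0,D)+s\,b_\kappa(C_1,D)$.

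For (iv), let $b'$ be symmetric and separately affine on $\Cop\times\Cop$ with $b'(C,C)=\kappa(C)$. Expanding $b'\bigl(\tfrac{C+D}{2},\tfrac{C+D}{2}\bigr)$ by separate affinity and symmetry gives $\kappa\bigl(\tfrac{C+D}{2}\bigr)=\tfrac14\kappa(C)+\tfrac12 b'(C,D)+\tfrac14\kappa(D)$. Solving for $b'(C,D)$ yields exactly $b_\kappa(C,D)$, which proves uniqueness.
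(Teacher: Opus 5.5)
Your proof is correct, but it is organized differently from the paper's.

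You get (iii) first, directly from the Bernstein expansion of the quadratic $t\mapsto\kappa((1-t)C+tD)$ evaluated at $t=0,\tfrac12,1$. The paper derives (iii) last, by expanding $b_\kappa(E_t,E_t)$ using the separate affinity from (i). Your ordering shows that (iii) needs nothing beyond the definition of degree.

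For separate affinity, both proofs rest on the same key fact: a function on a triangle that is quadratic along every segment is a bivariate polynomial of total degree at most two. The paper proves this from scratch in Lemma~\ref{lem:copula-triangle-quadratic}. It uses tensor Lagrange interpolation on a rectangle, kills the degree-three and degree-four homogeneous parts by restricting to lines, and then extends along segments from an interior point. That argument does not even need continuity. You instead invoke the fact as a Fr\'echet-type theorem, using continuity of $F$. This is legitimate, but it is the entire technical content of (i). You should therefore cite it precisely or supply an argument, applying it on the open triangle and extending to the closed one by continuity.

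The last step also differs. The paper substitutes a generic quadratic $a_0+a_1s+a_2t+a_{11}s^2+a_{12}st+a_{22}t^2$ into the definition of $b_\kappa$ and reads off affinity in $\lambda$. You first show $F=q$, using that a quadratic vanishing on the three sides of a nondegenerate triangle is zero, and then expand $q$ at the midpoint. This costs an extra (correct) divisibility step. In return it gives a stronger statement: on the whole triangle, $\kappa$ is the quadratic form assembled from the vertex values of $\kappa$ and the pairwise values of $b_\kappa$.

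Finally, the exploratory sentences about lines parallel to the edge joining $C_0$ and $C_1$ and about the medians are not an argument and should be removed.
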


We next translate affinity of $\kappa$ with its canonical polarization.
If $\kappa$ is affine, we have from the definition of $b_\kappa$ that
\begin{align}\label{eq:affine-polarization}
  b_\kappa(C,D)=\frac{\kappa(C)+\kappa(D)}{2},
  \qquad C,D\in\Cop.
\end{align}
Conversely, if
\eqref{eq:affine-polarization} holds, then substituting this into
\eqref{eq:quadratic-segment} gives affinity $  \kappa((1-t)C+tD)
  =
  (1-t)\kappa(C)+t\kappa(D)$ for every $C,D\in\Cop$ and
$t\in[0,1]$.
Consequently, we obtain the following result.

\begin{corollary}[Exact degree]\label{cor:exact-degree}
Suppose that the assumptions of
Proposition~\ref{prop:canonical-polarization} hold.
Then $\kappa$ is affine if and only if
\eqref{eq:affine-polarization} holds. 
Consequently, $\kappa$ has exact degree two if and only if \eqref{eq:affine-polarization} fails for at least one pair $(C,D)$.
\end{corollary}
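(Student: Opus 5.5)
The argument is short and rests entirely on Proposition~\ref{prop:canonical-polarization}. I will prove the two directions of the equivalence separately and then read off the statement about exact degree from Definition~\ref{def:mixture-degree}.

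\emph{Affine implies \eqref{eq:affine-polarization}.} Suppose $\kappa$ is affine on $\Cop$, meaning $\kappa((1-t)C+tD)=(1-t)\kappa(C)+t\kappa(D)$ for all $C,D\in\Cop$ and $t\in[0,1]$. Note that $(C+D)/2\in\Cop$ by convexity of $\Cop$. Taking $t=1/2$ in Definition~\ref{def:canonical:polarization} gives
\[
b_\kappa(C,D)=2\cdot\frac{\kappa(C)+\kappa(D)}{2}-\frac{\kappa(C)+\kappa(D)}{2}=\frac{\kappa(C)+\kappa(D)}{2},
\]
which is \eqref{eq:affine-polarization}.

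\emph{\eqref{eq:affine-polarization} implies affine.} Suppose \eqref{eq:affine-polarization} holds for all $C,D$. Since $\kappa$ is continuous and of degree at most two, Proposition~\ref{prop:canonical-polarization}(iii) gives \eqref{eq:quadratic-segment} for every $C,D\in\Cop$ and $t\in[0,1]$. Substituting $b_\kappa(C,D)=(\kappa(C)+\kappa(D))/2$ there yields
\[
\kappa((1-t)C+tD)=\bigl[(1-t)^2+t(1-t)\bigr]\kappa(C)+\bigl[t^2+t(1-t)\bigr]\kappa(D).
\]
The two brackets simplify to $(1-t)$ and $t$, respectively, so $\kappa$ is affine on every segment, hence affine on $\Cop$.

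\emph{Exact degree.} By Definition~\ref{def:mixture-degree}, $\kappa$ (already of degree at most two) has exact degree two exactly when it is not affine. By the equivalence just proved, this happens exactly when \eqref{eq:affine-polarization} fails for at least one pair $(C,D)$.

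There is no real obstacle. The only step with content is the reverse direction, and it relies entirely on identity \eqref{eq:quadratic-segment} from Proposition~\ref{prop:canonical-polarization}, which may be assumed.
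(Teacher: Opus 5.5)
Your proof is correct and follows essentially the same route as the paper. The forward direction is read off from Definition~\ref{def:canonical:polarization} using affinity at $t=1/2$. The converse substitutes \eqref{eq:affine-polarization} into \eqref{eq:quadratic-segment}, and the exact-degree claim then follows immediately from Definition~\ref{def:mixture-degree}.
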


In terms of the canonical polarization, axiomatic properties of measures of concordance can be reformulated as follows.

\begin{proposition}[Characterization in terms of the canonical polarization]
\label{prop:characterization:b}
Let $\kappa:\Cop\to\R$ be a continuous functional of degree at most two.
Then $\kappa$ is a measure of concordance if and only if the following
conditions hold.
\begin{enumerate}[label=\textup{(B\arabic*)},leftmargin=*,widest=B3]
  \item\label{cond:B1}
  $b_\kappa(C^\gamma,D^\gamma)
  =\eps(\gamma)b_\kappa(C,D)$ for all
  $C,D\in\Cop$ and $\gamma\in\mathcal D_4$.

  \item\label{cond:B2}
  For every $C\in\Cop$, the section
  $D\mapsto b_\kappa(C,D)$ is $\preceq$-monotone.

  \item\label{cond:B3}
  $b_\kappa(M,M)=1$.
\end{enumerate}
\end{proposition}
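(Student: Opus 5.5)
The plan is to prove both directions by translating each axiom of Definition~\ref{def:moc} into a statement about $b_\kappa$, using Proposition~\ref{prop:canonical-polarization}. Three facts will be used throughout: $b_\kappa$ is symmetric and separately affine, $b_\kappa(C,C)=\kappa(C)$, and the segment identity \eqref{eq:quadratic-segment} holds. Axiom~\ref{item:conv} is automatic, since $\kappa$ is assumed continuous. Axiom~\ref{item:M} is literally \ref{cond:B3}, because $b_\kappa(M,M)=\kappa(M)$. So the real work concerns the symmetry axioms versus \ref{cond:B1}, and monotonicity versus \ref{cond:B2}.

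\emph{Symmetries.} First assume \ref{cond:B1}. Setting $D=C$ gives $\kappa(C^\gamma)=\eps(\gamma)\kappa(C)$, which is \ref{item:pi:sym} and \ref{item:sigma:sym}.

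For the converse, the action $C\mapsto C^\gamma$ is affine, because push-forward is linear on measures. Hence $\bigl((C+D)/2\bigr)^\gamma=(C^\gamma+D^\gamma)/2$. Applying $\kappa(\cdot^\gamma)=\eps(\gamma)\kappa(\cdot)$ to each of the three terms in Definition~\ref{def:canonical:polarization} gives $b_\kappa(C^\gamma,D^\gamma)=\eps(\gamma)b_\kappa(C,D)$, which is \ref{cond:B1}.

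\emph{Monotonicity.} Assume \ref{cond:B2}, and let $C\preceq D$. We would like to connect $C$ and $D$ by a path along which $\kappa$ increases. Put $C_t=(1-t)C+tD$. By \eqref{eq:quadratic-segment}, $t\mapsto\kappa(C_t)$ is a quadratic polynomial. By bilinearity and symmetry, its derivative is
\[
\frac{\dd}{\dd t}\kappa(C_t)=2\,b_\kappa(C_t,D)-2\,b_\kappa(C_t,C).
\]
Here I am writing $b_\kappa(C_t,D-C)$ formally as the difference of two values of $b_\kappa$, which is legitimate by separate affinity. Since $C\preceq D$, \ref{cond:B2} applied to the section $D'\mapsto b_\kappa(C_t,D')$ gives $b_\kappa(C_t,C)\le b_\kappa(C_t,D)$. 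So the derivative is nonnegative for every $t\in[0,1]$, and therefore $\kappa(C)\le\kappa(D)$.

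To justify the derivative formula, I will expand
\[
\kappa(C_t)=(1-t)^2b(C,C)+2t(1-t)b(C,D)+t^2b(D,D)
\]
and differentiate directly. This yields
\[
2(1-t)\bigl(b(C,D)-b(C,C)\bigr)+2t\bigl(b(D,D)-b(C,D)\bigr),
\]
which equals $2\bigl(b(C_t,D)-b(C_t,C)\bigr)$ by affinity in the first argument.

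Conversely, assume \ref{item:monotone}; this is the direction I expect to be the main obstacle. Fix $E\in\Cop$ and $C\preceq D$. We must show $b_\kappa(E,C)\le b_\kappa(E,D)$. Monotonicity of $\kappa$ only controls the diagonal, so I will use a local perturbation around $E$. For $s\in(0,1]$, the copulas $(1-s)E+sC\preceq(1-s)E+sD$ are comparable, so $\kappa$ is ordered on them. Expanding both with \eqref{eq:quadratic-segment} gives
\[
0\le \kappa((1-s)E+sD)-\kappa((1-s)E+sC)=2s(1-s)\bigl[b(E,D)-b(E,C)\bigr]+s^2\bigl[\kappa(D)-\kappa(C)\bigr].
\]
Dividing by $s$ and letting $s\downarrow0$ yields $b(E,D)-b(E,C)\ge0$, which is \ref{cond:B2}. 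The whole argument therefore reduces to checking this expansion carefully via separate affinity; no further estimates are needed.
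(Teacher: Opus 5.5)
Your proposal is correct and follows the paper's proof essentially step by step: (B1) and (B3) are handled identically, and your perturbation argument for monotonicity $\Rightarrow$ (B2) is exactly the paper's lemma on diagonal versus sectionwise monotonicity (compare $(1-s)E+sC\preceq(1-s)E+sD$, expand via \eqref{eq:quadratic-segment}, divide by $s$, let $s\downarrow0$). The only deviation is in (B2) $\Rightarrow$ monotonicity: you integrate the nonnegative derivative $2\{b_\kappa(C_t,D)-b_\kappa(C_t,C)\}$ along the segment, whereas the paper uses the shorter chain $\kappa(D)=b_\kappa(D,D)\geq b_\kappa(D,C)=b_\kappa(C,D)\geq b_\kappa(C,C)=\kappa(C)$, and both arguments are valid.
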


For each $d\in\{1,2\}$, denote by $\mathcal K_d$ the class of all
measures of concordance of degree exactly $d$ and by $\mathcal B_d$ the class of all jointly continuous, symmetric and separately affine maps $b:\Cop\times\Cop\to\R$ satisfying 
\ref{cond:B1}--\ref{cond:B3} with $b$ in place of $b_\kappa$, and $C\mapsto b(C,C)$ has degree exactly $d$. Write $\mathcal K_{1,2}
  =
  \mathcal K_1\cup\mathcal K_2$ and $\mathcal B_{1,2}
  =
  \mathcal B_1\cup\mathcal B_2$.
Combining the above results in this subsection, we obtain the following.

\begin{proposition}[Bijection induced by canonical polarization]
\label{prop:canonical-polarization-bijection}
The canonical-polarization map $\mathsf P:\mathcal K_{1,2}
  \rightarrow
  \mathcal B_{1,2}$ defined by $\mathsf P(\kappa)=
  b_\kappa$
is a bijection.
Its inverse is the diagonal map $\mathsf D:\mathcal B_{1,2}
  \rightarrow
  \mathcal K_{1,2}$ defined by $\mathsf D(b): C\rightarrow 
  b(C,C)$.
Moreover, for each $d\in\{1,2\}$, the restriction of $\mathsf P$
to $\mathcal K_d$ is a bijection from $\mathcal K_d$ onto
$\mathcal B_d$.
\end{proposition}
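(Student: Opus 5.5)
The plan is to verify three things in order: that $\mathsf P$ maps $\mathcal K_{1,2}$ into $\mathcal B_{1,2}$, that $\mathsf D$ maps $\mathcal B_{1,2}$ into $\mathcal K_{1,2}$, and that the two maps are mutually inverse. The degree-preserving refinement for each $d$ will then follow from the fact that both maps keep the diagonal $C\mapsto b(C,C)$ unchanged.

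\emph{Well-definedness of $\mathsf P$.} Let $\kappa\in\mathcal K_d$. Then $\kappa$ is continuous by \ref{item:conv} and has degree at most two.
\begin{itemize}
\item Proposition~\ref{prop:canonical-polarization}(i) shows that $b_\kappa$ is jointly continuous, symmetric and separately affine.
\item Proposition~\ref{prop:characterization:b} shows that $b_\kappa$ satisfies \ref{cond:B1}--\ref{cond:B3}.
\item Proposition~\ref{prop:canonical-polarization}(ii) gives $b_\kappa(C,C)=\kappa(C)$, so the diagonal has exact degree $d$.
\end{itemize}
Hence $b_\kappa\in\mathcal B_d$.

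\emph{Well-definedness of $\mathsf D$.} Let $b\in\mathcal B_d$ and put $\kappa(C)=b(C,C)$.
\begin{itemize}
\item $\kappa$ is continuous, being the composition of the continuous diagonal embedding $C\mapsto(C,C)$ with the jointly continuous map $b$.
\item Separate affinity and symmetry give
\[
\kappa((1-t)C+tD)=(1-t)^2b(C,C)+2t(1-t)b(C,D)+t^2b(D,D).
\]
This is a polynomial in $t$ of degree at most two, so $\kappa$ has degree at most two.
\item We would next apply Proposition~\ref{prop:characterization:b}. Its hypotheses are phrased through $b_\kappa$, not $b$, so we must first show $b_\kappa=b$. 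Proposition~\ref{prop:canonical-polarization}(iv) gives this: $b$ is symmetric and separately affine with diagonal $\kappa$, and uniqueness forces $b_\kappa=b$. Alternatively, expand directly with $t=1/2$: $2b(\tfrac{C+D}{2},\tfrac{C+D}{2})=\tfrac12(b(C,C)+2b(C,D)+b(D,D))$, and subtracting $\tfrac12(\kappa(C)+\kappa(D))$ leaves $b(C,D)$.
\item Now \ref{cond:B1}--\ref{cond:B3} hold for $b_\kappa$, so Proposition~\ref{prop:characterization:b} shows $\kappa$ is a measure of concordance. Its degree is exactly $d$ by the definition of $\mathcal B_d$, so $\kappa\in\mathcal K_d$.
\end{itemize}

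\emph{Mutual inverses.} We have $\mathsf D\mathsf P(\kappa)=\kappa$ by Proposition~\ref{prop:canonical-polarization}(ii). We have $\mathsf P\mathsf D(b)=b$ by the identity $b_\kappa=b$ just established. Both maps preserve $\mathcal K_d$ and $\mathcal B_d$ respectively, so the restrictions to $\mathcal K_d$ are bijections onto $\mathcal B_d$. Finally, $\mathcal K_1\cap\mathcal K_2=\emptyset$ and $\mathcal B_1\cap\mathcal B_2=\emptyset$, so the separate bijections combine into the bijection between the unions.

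The only nontrivial point is the identity $b_\kappa=b$ for an arbitrary $b\in\mathcal B_{1,2}$, which is what lets us apply Proposition~\ref{prop:characterization:b} in the converse direction. It is settled by the uniqueness statement (iv) or by the direct $t=1/2$ expansion. Everything else is bookkeeping.
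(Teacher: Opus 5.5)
Your proposal is correct and follows essentially the same route as the paper. Both arguments get $\mathsf P(\mathcal K_d)\subset\mathcal B_d$ from Propositions~\ref{prop:canonical-polarization} and~\ref{prop:characterization:b}. For $b\in\mathcal B_d$, both use the uniqueness in Proposition~\ref{prop:canonical-polarization}~(iv) to get $b_{\kappa_b}=b$, apply Proposition~\ref{prop:characterization:b}, and note that $\mathsf D\circ\mathsf P$ and $\mathsf P\circ\mathsf D$ are identities. Your extra degree-at-most-two expansion is harmless but already implied by the definition of $\mathcal B_d$.
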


\subsection{The characterization theorem}\label{sec:characterization:field}

 Denote by $w$ the pointwise Fr\'echet--Hoeffding order width, that is,
\begin{align*}
  w(u,v)
  =(M-W)(u,v)
  =\min\{u,v,1-u,1-v\},
  \qquad (u,v)\in[0,1]^2.
\end{align*}
Note that $w$ is continuous on the closed square, positive on its
interior, and zero on its boundary. 

Let $\Meas_w^+$ denote the convex cone
of non-negative Borel measures $\nu$ on $(0,1)^2$
satisfying $\int_{(0,1)^2}w\,\dd\nu
  <\infty$.
Since $w$ is bounded away from zero on every non-empty compact
subset $K$ of $(0,1)^2$, every $\nu\in\Meas_w^+$ satisfies $\nu(K)\leq
  \int_{(0,1)^2}w\,\dd\nu/\min_K w
  <
  \infty$.
Thus every measure in $\Meas_w^+$ is finite on compact subsets of
$(0,1)^2$ and hence is locally finite.
Since $(0,1)^2$ is a locally compact Hausdorff space in which every
open set is $\sigma$-compact, every such measure is a regular Borel
measure~\citep[Theorem~2.18]{rudin1987}.
The Fr\'echet--Hoeffding bounds imply $|C-\Pi|\leq w$, and thus every integral
below is finite unless stated explicitly.
Note that measures in $\Meas_w^+$ need not have finite total mass. Nevertheless, if
$\nu,\widetilde\nu\in\Meas_w^+$ and $|f|\leq K w$ for some $K>0$, the integral $\int f\,\dd(\nu-\widetilde\nu)$ is well-defined as the difference $\int f\,\dd\nu-\int f\,\dd\widetilde\nu$ since both integrals are finite. 
% Note that $\nu-\tilde \nu$ is a signed Radon measure on every compact subset of $(0,1)^2$.

Our characterization is based on a field of $\Meas_w^+$ satisfying certain properties listed below.

\begin{definition}[Admissible field]\label{def:admissible:field}
A field $\nu=(\nu_C)_{C\in \mathcal C}$ of measures is called \emph{admissible} if the following six properties hold.
    \begin{enumerate}[label=\textup{(P\arabic*)},leftmargin=*,widest=ii]
  \item\label{cond:positive:w:field} $\nu_C\in\Meas_w^+$ for every $C\in\Cop$.
  \item\label{cond:affine} The field is affine, that is, $\nu_{(1-t)C+tD}=(1-t)\nu_C+t\nu_D$, $C,D\in\Cop$, $0\leq t\leq1$.
  \item\label{cond:reciprocity} The following 
  \emph{reciprocity identity} holds:
  \begin{align}\label{eq:reciprocity:identity}
    \int_{(0,1)^2}(D-\Pi)\,\dd(\nu_C-\nu_\Pi)
    =\int_{(0,1)^2}(C-\Pi)\,\dd(\nu_D-\nu_\Pi),\quad C,D \in \Cop.
  \end{align}
  \item\label{cond:D4:equivariance} The field is equivariant under $\mathcal D_4$: $\nu_{C^\gamma}=\gamma_\#\nu_C$
    , $C\in\Cop$, $\gamma\in \mathcal D_4$.
  \item\label{cond:diagonal:continuity} The diagonal map $\zeta_\nu:\Cop\to \R$:
  \begin{align}\label{eq:diagonal:map}
    \zeta_\nu(C)= \int_{(0,1)^2}(C-\Pi)\,\dd\nu_\Pi
      +\int_{(0,1)^2}(C-\Pi)\,\dd\nu_C
  \end{align}
  is continuous on $\Cop$.
  \item\label{cond:normalization} The field is normalized by
  \begin{align*}
    \int_{(0,1)^2}(M-\Pi)\,\dd\nu_\Pi
    +\int_{(0,1)^2}(M-\Pi)\,\dd\nu_M=1.
  \end{align*}
\end{enumerate}
\end{definition}

\begin{theorem}[Characterization with measure-field]
\label{thm:measure:field:characterization}
The following two assertions hold.
\begin{enumerate}[label=\textup{(\roman*)},leftmargin=*,widest=ii]
  \item\label{item:characterization:if} Let $\kappa$ be a measure of concordance of degree at most
  two.  Then there is a unique admissible field $\nu^\kappa=(\nu_C^\kappa)_{C\in\Cop}$ such that
  \begin{align}\label{eq:anchored:field:representation}
    b_\kappa(C,D)
    =\int_{(0,1)^2}(C-\Pi)\,\dd\nu_\Pi^\kappa
     +\int_{(0,1)^2}(D-\Pi)\,\dd\nu_C^\kappa.
  \end{align}
  \item\label{item:characterization:only:if}  Conversely, let $\nu=(\nu_C)_{C\in\Cop}$ be an admissible field, and define $\kappa_\nu$ as the diagonal map $\zeta_\nu$ in~\eqref{eq:diagonal:map}.
  Then $\kappa_\nu$ is a measure of concordance of degree at most
  two.
 In addition, its canonical polarization is the right-hand side of
  \eqref{eq:anchored:field:representation}, and the corresponding admissible field specified in~\ref{item:characterization:if}  is
  $\nu$.
\end{enumerate}
\end{theorem}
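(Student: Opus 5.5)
Part (i) rests on an integral representation of affine monotone functionals on $\Cop$ that vanish at $\Pi$, in the spirit of~\citet{edwardstaylor2009}. I would first prove the following lemma. \emph{If $\phi:\Cop\to\R$ is affine, $\preceq$-monotone and continuous, then there is a unique $\nu\in\Meas_w^+$ with $\phi(D)-\phi(\Pi)=\int_{(0,1)^2}(D-\Pi)\,\dd\nu$ for all $D\in\Cop$.} The proof has three steps. Extend $\phi-\phi(\Pi)$ linearly to the span of $\{D-\Pi: D\in\Cop\}$. Show that this extension is a positive functional on the cone generated by differences $D-C$ with $C\preceq D$; this cone contains enough nonnegative functions supported in the interior, e.g.\ shuffles and patchwork perturbations of $\Pi$ that add a small positive bump. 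Then apply the Riesz representation theorem on $(0,1)^2$. The weight bound $\int w\,\dd\nu<\infty$ comes from $\phi(M)-\phi(W)<\infty$ together with $M-W=w$. Uniqueness follows because differences $D-\Pi$ separate locally finite measures; for instance, checkerboard perturbations generate indicator-type functions of small rectangles.

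Given $\kappa$, Proposition~\ref{prop:canonical-polarization} and Proposition~\ref{prop:characterization:b} show that each section $D\mapsto b_\kappa(C,D)$ is affine, continuous and monotone. Applying the lemma sectionwise yields $\nu_C^\kappa$ with $b_\kappa(C,D)-b_\kappa(C,\Pi)=\int(D-\Pi)\,\dd\nu_C^\kappa$. Applying it once more to $C\mapsto b_\kappa(C,\Pi)=b_\kappa(\Pi,C)$ gives $b_\kappa(C,\Pi)=b_\kappa(\Pi,\Pi)+\int(C-\Pi)\,\dd\nu_\Pi^\kappa$, and $b_\kappa(\Pi,\Pi)=\kappa(\Pi)=0$. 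Together these give \eqref{eq:anchored:field:representation}. Next I check admissibility.

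\textbf{(P1).} This is the lemma.

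\textbf{(P2).} Affinity of $C\mapsto b_\kappa(C,D)$ combined with uniqueness in the lemma gives $\nu_{(1-t)C+tD}=(1-t)\nu_C+t\nu_D$.

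\textbf{(P3).} This is symmetry of $b_\kappa$ rewritten via the representation: subtract the $\nu_\Pi$ terms from both $b_\kappa(C,D)$ and $b_\kappa(D,C)$.

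\textbf{(P4).} From (B1), $\int(D^\gamma-\Pi)\,\dd\nu_{C^\gamma}=\eps(\gamma)\int(D-\Pi)\,\dd\nu_C$ up to the $\nu_\Pi$ terms. I would use the change-of-variables identity $\int(D^\gamma-\Pi)\,\dd\gamma_\#\nu=\eps(\gamma)\int(D-\Pi)\,\dd\nu$. It suffices to verify this for the generators $\pi$ and $\sigma_1$; for $\sigma_1$ it reads $D^{\sigma_1}(u,v)=v-D(1-u,v)$, which gives the sign. Uniqueness then yields $\nu_{C^\gamma}=\gamma_\#\nu_C$, first for $C=\Pi$ and then in general.

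\textbf{(P5) and (P6).} These are $\zeta_\nu=\kappa$ together with (A5) and (A3). Uniqueness of the whole field follows from uniqueness in the lemma.

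For part (ii), define $b$ as the right-hand side of \eqref{eq:anchored:field:representation}. By (P2), $b$ is affine in $C$, and it is clearly affine in $D$. By (P3), $b$ is symmetric: the difference $b(C,D)-b(D,C)$ equals the reciprocity defect. Its diagonal is $\zeta_\nu$, so $\kappa_\nu(C)=b(C,C)$ is a quadratic polynomial along segments, hence of degree at most two, and it is continuous by (P5). Because (B1)--(B3) involve $b_\kappa$, I would next identify $b_{\kappa_\nu}=b$ using uniqueness in Proposition~\ref{prop:canonical-polarization}(iv). This requires only that $b$ be symmetric, separately affine and have diagonal $\kappa_\nu$, and joint continuity is not needed. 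Then (B2) holds because $D\mapsto\int(D-\Pi)\,\dd\nu_C$ is monotone, since $\nu_C\ge0$. (B3) is (P6), and (B1) follows from (P4) and the change-of-variables identity above. Proposition~\ref{prop:characterization:b} then shows that $\kappa_\nu$ is a measure of concordance. Finally, the field recovered in (i) equals $\nu$ by uniqueness.

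I expect the main obstacle to be the lemma, specifically the positivity and Riesz step. The concordance order only compares copulas, so I must show that the affine extension is well defined and positive on enough compactly supported continuous nonnegative functions. One also has to handle the fact that $\nu$ may have infinite mass near the boundary. My first approach would be to approximate any $g\in C_c^+((0,1)^2)$, scaled small, as $D-\Pi$ with $D\succeq\Pi$, via $D=\Pi+\eps g$ after adjusting $g$ to have the form $\partial_u\partial_v$-positive plus zero margins. If that fails, I would instead use functions of the form $\int h\,\dd\lambda$ built from $\eps$-perturbations of rectangle-indicators. Continuity (A5) is what lets me pass to limits and obtain regularity.
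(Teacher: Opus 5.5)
Your treatment of the theorem itself matches the paper. You represent each section $D\mapsto b_\kappa(C,D)$, use the $\Pi$-section for the anchor term, use uniqueness for \ref{cond:affine} and \ref{cond:D4:equivariance}, and use symmetry for \ref{cond:reciprocity}. In part (ii) you identify $b_{\kappa_\nu}$ through Proposition~\ref{prop:canonical-polarization}(iv); checking \ref{cond:B1}--\ref{cond:B3} instead of the axioms directly is an equivalent variant.

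The gap is in your key lemma, and it is not where you locate it. The interior Riesz step is easy. For $g\in C_c^\infty((0,1)^2)$, $\Pi+\eps g$ is a copula for small $\eps$, with density $1+\eps\,\partial_u\partial_v g$; no adjustment of $g$ is needed. So $L$ is positive on $C_c^\infty$, and you get a unique Radon measure $\nu$ on $(0,1)^2$ with $L(f)=\int f\,\dd\nu$ for $f\in C_c^\infty((0,1)^2)$.

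However, neither $D-\Pi$ for a general copula nor $w=M-W$ is compactly supported in $(0,1)^2$. Your derivation of $\int w\,\dd\nu<\infty$ from ``$\phi(M)-\phi(W)<\infty$ and $M-W=w$'' is therefore circular. Bounding $\int f\,\dd\nu$ for $f\in C_c^\infty$ with $0\le f\le w$ needs $L(w-f)\ge 0$. But $w-f=M-(W+f)$ and $W+f$ is not a copula, so monotonicity does not give this directly.

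Likewise, $\phi(D)-\phi(\Pi)=\int(D-\Pi)\,\dd\nu$ on all of $\Cop$ does not follow from the interior identity. A generalized (Banach) limit as $u\downarrow0$ of $\int_0^1\{D(u,v)-uv\}/u\,\dd v$ is affine and monotone. It vanishes whenever $D-\Pi$ is compactly supported, yet it equals $1/2$ at $M$. Only continuity of $\phi$ excludes such boundary contributions, and only through a concrete approximation scheme, which your sketch does not provide.

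The missing ingredient is a copula-valued interior approximation, such as the Edwards--Taylor map $\Box_k$ used in the paper. It satisfies three things: $\Box_k(C)-\Pi$ vanishes outside the compact square $R_k$; $\Box_k(C)\to C$ uniformly; and $0\le w_k=\Box_k(M)-\Box_k(W)\le w$. With it your route closes in three steps.
\begin{enumerate}
\item Since $\mu_{\Box_k(C)}$ agrees with Lebesgue measure off $R_k$, mollifying $\Box_k(C)-\Pi$ yields copulas of the form $\Pi+(\text{smooth compactly supported function})$ converging uniformly to $\Box_k(C)$. Continuity of $\phi$ then gives $\phi(\Box_k(C))-\phi(\Pi)=\int(\Box_k(C)-\Pi)\,\dd\nu$.
\item Fatou applied to $w_k\to w$ gives $\int w\,\dd\nu\le\phi(M)-\phi(W)$.
\item Dominated convergence, using $|\Box_k(C)-\Pi|\le w$, together with $\phi(\Box_k(C))\to\phi(C)$, gives the representation for every $C\in\Cop$.
\end{enumerate}
This is the counterpart of the paper's Lemma~\ref{lem:checkerboard-boundary-tightness}. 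The paper instead discretizes with checkerboard copulas, extracts a weak limit of the finite measures $w\,\nu_n$, and uses $\Box_k$ to show that no mass escapes to $\partial[0,1]^2$. Once this step is supplied, your Riesz-based lemma is a valid and somewhat more direct alternative to that compactness argument.
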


Consequently, there is a one-to-one correspondence between measures of concordance of degree at most two and admissible fields via $\kappa\mapsto\nu^\kappa$ and $\nu\mapsto\kappa_\nu$.

% \subsection{Examples}\label{sec:examples}

We record how the characterization specializes to existing measures of concordance.

\begin{example}[Affine measures of concordance]\label{ex:affine}
Suppose that $\kappa$ is an affine measure of concordance.
According to~\citet[][Theorem~1]{edwardstaylor2009}, it admits the representation
 $\kappa(C)=\int_{(0,1)^2}(C-\Pi)\,\dd\nu_\kappa$, where $\nu_\kappa$ is the unique measure on $([0,1]^2,\mathcal B([0,1]^2))$ such that $\nu_\kappa(\partial[0,1]^2)=0$, $\gamma_\#\nu_{\kappa}=\nu_{\kappa}$ (\emph{$\mathcal D_4$-invariance}) and $\int (M-\Pi)\,\dd\nu_\kappa=1$.
By comparing~\eqref{eq:affine-polarization} with~\eqref{eq:anchored:field:representation} in terms of Corollary~\ref{cor:exact-degree}, the unique admissible field of $\kappa$ is constant and given by $(\nu_C^\kappa)_{C\in\Cop}=((1/2)\,\nu_\kappa|_{(0,1)^2})$; it is straightforward to check that this field is admissible.
\end{example}

\begin{example}[Kendall's tau]\label{ex:kendall}
Let $[C,D]=\int_{[0,1]^2} C\,\dd\mu_D$ and $Q(C,D)=4[C,D]-1$.
Note that \emph{Kendall's tau} is given by $\tau(C)=Q(C,C)$.
The map $Q$ on $\mathcal C \times \mathcal C$ is jointly continuous, symmetric, and separately affine; see~\citet{fuchs2016biconvex}.
Consequently, $Q$ is the canonical polarization of $\tau$ by Proposition~\ref{prop:canonical-polarization}~\textup{(iv)}.
Since
\begin{align*}
  Q(C,D)
=\left\{\int_{(0,1)^2}\Pi\,\dd(4\mu_C|_{(0,1)^2})-1\right\}
    +\int_{(0,1)^2}(D-\Pi)\,\dd(4\mu_C|_{(0,1)^2}),
\end{align*}
comparison with~\eqref{eq:anchored:field:representation} shows that the
unique admissible field associated with Kendall's tau is given by $(\nu_C^\tau)=(4\mu_C|_{(0,1)^2})$.
The admissibility of this field follows directly from the properties established in~\citet{fuchs2016biconvex}.
\end{example}

\section{Constructions with copula operators}\label{sec:constructions}

In this section, we construct measures of concordance of degree two by using the characterization result of Theorem~\ref{thm:measure:field:characterization}.

The first result converts an operator on copulas directly into an admissible
measure field. It provides a practical sufficient condition for all six
requirements in Definition~\ref{def:admissible:field}.

\begin{proposition}[Self-adjoint copula-operator construction]
\label{prop:operator-construction}
Let $\Theta:\Cop\to\Cop$ be continuous and affine. Assume that $\Theta$
satisfies the following properties.
\begin{enumerate}[label=\textup{(S\arabic*)},leftmargin=*,widest=S3]
  \item\label{cond:T-equivariant}
  $\Theta(C^\gamma)=\Theta(C)^\gamma$ for every
  $C\in\Cop$ and $\gamma\in\mathcal D_4$.

  \item\label{cond:T-fixed}
  $\Theta(\Pi)=\Pi$ and $\Theta(M)=M$.

  \item\label{cond:T-self-adjoint}
  $Q(\Theta(C),D)=Q(C,\Theta(D))$ for every $C,D\in\Cop$.
\end{enumerate}
Then $\nu^\Theta=(\nu_C^\Theta)_{C\in\Cop}$ with $\nu_C^\Theta
  =
  4\mu_{\Theta(C)}$ is an admissible field. The corresponding measure of concordance and its
canonical polarization are $\kappa_\Theta(C)
  =
  Q(\Theta(C),C)$ and $b_{\kappa_\Theta}(C,D)
  =
  Q(\Theta(C),D)$, respectively. Moreover, $\kappa_\Theta$ has exact degree two.
\end{proposition}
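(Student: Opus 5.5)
The plan is to verify directly that the field $\nu_C^\Theta=4\mu_{\Theta(C)}$ (restricted to $(0,1)^2$) satisfies \ref{cond:positive:w:field}--\ref{cond:normalization}, and then invoke Theorem~\ref{thm:measure:field:characterization}\ref{item:characterization:only:if}. Everything rests on one translation identity. Since $D-\Pi$ vanishes on $\partial[0,1]^2$, the boundary mass of $\mu_E$ does not matter, and for all $D,E\in\Cop$
\begin{align*}
  \int_{(0,1)^2}(D-\Pi)\,\dd(4\mu_E)=Q(D,E)-Q(\Pi,E).
\end{align*}
I will also use the following facts:
\begin{itemize}
  \item $Q$ is symmetric and jointly continuous (Example~\ref{ex:kendall});
  \item $Q(\Pi,\Pi)=\tau(\Pi)=0$ and $Q(M,M)=\tau(M)=1$;
  \item $Q(M,\Pi)=4\int\min(u,v)\,\dd u\,\dd v-1=1/3$.
\end{itemize}

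The routine properties come first. For \ref{cond:positive:w:field}, $4\mu_{\Theta(C)}$ is a finite non-negative measure, so $\int w\,\dd\nu_C<\infty$. For \ref{cond:affine}, $\Theta$ is affine and $E\mapsto\mu_E$ is affine. For \ref{cond:D4:equivariance}, \ref{cond:T-equivariant} gives $\mu_{\Theta(C^\gamma)}=\mu_{\Theta(C)^\gamma}=\gamma_\#\mu_{\Theta(C)}$, and $\gamma$ maps $(0,1)^2$ onto itself, so restriction commutes with push-forward.

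For the reciprocity identity \ref{cond:reciprocity}, I use $\Theta(\Pi)=\Pi$ together with the self-adjointness \ref{cond:T-self-adjoint}, which give $Q(\Pi,\Theta(C))=Q(\Theta(\Pi),C)=Q(\Pi,C)$. The left-hand side of \eqref{eq:reciprocity:identity} then equals
\begin{align*}
  Q(D,\Theta(C))-Q(\Pi,C)-Q(D,\Pi)+Q(\Pi,\Pi).
\end{align*}
This expression is symmetric in $(C,D)$, because $Q(D,\Theta(C))=Q(\Theta(D),C)=Q(C,\Theta(D))$ by \ref{cond:T-self-adjoint} and the symmetry of $Q$.

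For \ref{cond:diagonal:continuity}, the same computation gives
\begin{align*}
  \zeta_\nu(C)=Q(C,\Pi)-Q(\Pi,\Pi)+Q(C,\Theta(C))-Q(\Pi,C)=Q(\Theta(C),C).
\end{align*}
This is continuous, since $\Theta$ and $Q$ are continuous. For \ref{cond:normalization}, $\zeta_\nu(M)=Q(\Theta(M),M)=Q(M,M)=1$ by \ref{cond:T-fixed}.

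Theorem~\ref{thm:measure:field:characterization}\ref{item:characterization:only:if} now shows that $\kappa_\Theta=\zeta_\nu=Q(\Theta(\cdot),\cdot)$ is a measure of concordance of degree at most two. Its canonical polarization is the right-hand side of \eqref{eq:anchored:field:representation}:
\begin{align*}
  Q(C,\Pi)-Q(\Pi,\Pi)+Q(D,\Theta(C))-Q(\Pi,\Theta(C))=Q(\Theta(C),D),
\end{align*}
where again $Q(\Pi,\Theta(C))=Q(\Pi,C)$ is used.

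For exact degree two I apply Corollary~\ref{cor:exact-degree} with the pair $(\Pi,M)$. We have $b(\Pi,M)=Q(\Theta(\Pi),M)=Q(\Pi,M)=1/3$. On the other hand, $(\kappa_\Theta(\Pi)+\kappa_\Theta(M))/2=(0+1)/2=1/2$. So \eqref{eq:affine-polarization} fails, and $\kappa_\Theta$ is not affine.

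I expect no serious obstacle. The only delicate points are bookkeeping ones:
\begin{itemize}
  \item justifying the restriction of $\mu_E$ to the open square, which is harmless because $D-\Pi$ vanishes on the boundary;
  \item consistently using $Q(\Pi,\Theta(C))=Q(\Pi,C)$, the single place where self-adjointness and $\Theta(\Pi)=\Pi$ interact.
\end{itemize}
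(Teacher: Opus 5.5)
Your proposal is correct and follows essentially the same route as the paper. You rewrite each integral against $4\mu_{\Theta(C)}$ as a difference of $Q$-values, and use \ref{cond:T-fixed} and \ref{cond:T-self-adjoint} to get $Q(\Pi,\Theta(C))=Q(\Pi,C)$. This gives $b(C,D)=Q(\Theta(C),D)$ and hence \ref{cond:positive:w:field}--\ref{cond:normalization}, after which you invoke Theorem~\ref{thm:measure:field:characterization}. The only cosmetic difference is that you certify exact degree two through Corollary~\ref{cor:exact-degree} with the pair $(\Pi,M)$, since $1/3\neq 1/2$; the paper instead computes $\kappa_\Theta((1-t)\Pi+tM)=(2t+t^2)/3$ directly, which encodes the same fact.
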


The square-group action and averaging over square-group transforms are
established tools as seen, for example, in~\citet{fuchsschmidt2014,fuchs2016induced}.
In this viewpoint, we specialize Proposition~\ref{prop:operator-construction} to convex
averages of the eight square-group transforms of a copula.

\begin{proposition}[Weighted square-group averages]
\label{prop:group:averaging}
For a vector
$\bm w=(w_\gamma)_{\gamma\in\mathcal D_4}$ with
$w_\gamma\geq0$ for every $\gamma\in\mathcal D_4$ and
$\sum_{\gamma\in\mathcal D_4}w_\gamma=1$, define the operator
$\Theta_{\bm w}:\Cop\to\Cop$ by
\begin{align}
  \Theta_{\bm w}(C)
  &=
  \sum_{\gamma\in\mathcal D_4}
  w_\gamma C^\gamma,
  \qquad C\in\Cop.
  \label{eq:group-average-operator}
\end{align}
Then $\Theta_{\bm w}$ satisfies
Conditions~\ref{cond:T-equivariant}--\ref{cond:T-self-adjoint} of
Proposition~\ref{prop:operator-construction} if and only if $w_\pi
  =
  w_{\pi\circ \varrho}$ and $w_{\sigma_1}
  =
  w_{\sigma_2}
  =
  w_{\sigma_1\circ\pi}
  =
  w_{\pi\circ\sigma_1}
  =
  0$ hold.
\end{proposition}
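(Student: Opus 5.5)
The plan is to analyse the three conditions~\ref{cond:T-equivariant}--\ref{cond:T-self-adjoint} separately for $\Theta_{\bm w}$ in~\eqref{eq:group-average-operator}. Condition~\ref{cond:T-fixed} by itself forces the four vanishing weights. Condition~\ref{cond:T-equivariant} forces $\bm w$ to be constant on the conjugacy classes of $\mathcal D_4$, which together with the vanishing weights gives $w_\pi=w_{\pi\circ\varrho}$. Condition~\ref{cond:T-self-adjoint} then holds automatically, by the $\mathcal D_4$-behaviour of $Q$.

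\emph{Step 1 (S2).} Since $\Pi^\gamma=\Pi$ for all $\gamma$, we always have $\Theta_{\bm w}(\Pi)=\Pi$. The even elements $\gamma\in G_+:=\{\mathrm{id},\pi,\varrho,\pi\circ\varrho\}$ (those with $\eps(\gamma)=1$) satisfy $M^\gamma=M$. The other four elements satisfy $M^\gamma=W$. Hence
\begin{align*}
\Theta_{\bm w}(M)=\Big(\sum_{\gamma\in G_+}w_\gamma\Big)M+\Big(\sum_{\gamma\notin G_+}w_\gamma\Big)W .
\end{align*}
Because $M\neq W$, this equals $M$ if and only if $w_{\sigma_1}=w_{\sigma_2}=w_{\sigma_1\circ\pi}=w_{\pi\circ\sigma_1}=0$.

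\emph{Step 2 (S1).} Using $(C^{\delta})^{\gamma}=C^{\gamma\circ\delta}$, reindex both sides of the equivariance identity:
\begin{align*}
\Theta_{\bm w}(C^\delta)=\sum_{\eta}w_{\eta\circ\delta^{-1}}C^\eta ,\qquad
\Theta_{\bm w}(C)^\delta=\sum_\eta w_{\delta^{-1}\circ\eta}C^\eta .
\end{align*}
Sufficiency is direct. If $\bm w$ is a class function, i.e.\ $w_{\delta^{-1}\eta\delta}=w_\eta$, then the two coefficient families agree and the identity holds.

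For necessity, pick one copula $C_0$ whose eight transforms $C_0^\eta$ have linearly independent measures. For instance, take a checkerboard (shuffle-type) copula carrying a small extra block of mass near a point $(a,b)$ with $a,b,1-a,1-b$ pairwise distinct. The eight $\mathcal D_4$-images of a small neighbourhood of $(a,b)$ are then disjoint. Evaluating $\mu$ of these neighbourhoods shows that equality of the two sums forces $w_{\eta\circ\delta^{-1}}=w_{\delta^{-1}\circ\eta}$ for all $\eta,\delta$. This says exactly that $\bm w$ is constant on the conjugacy classes
\begin{align*}
\{\mathrm{id}\},\ \{\varrho\},\ \{\pi,\pi\circ\varrho\},\ \{\sigma_1,\sigma_2\},\ \{\sigma_1\circ\pi,\pi\circ\sigma_1\}.
\end{align*}
Combined with Step~1, the only surviving constraint is $w_\pi=w_{\pi\circ\varrho}$. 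Conversely, under the stated conditions $\bm w$ is supported on $G_+$ with $w_\pi=w_{\pi\circ\varrho}$, which is a class function, so~\ref{cond:T-equivariant} holds.

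\emph{Step 3 (S3).} By Example~\ref{ex:kendall}, $Q=b_\tau$, so Proposition~\ref{prop:characterization:b}~\ref{cond:B1} gives $Q(C^\gamma,D^\gamma)=\eps(\gamma)Q(C,D)$. Every $\gamma\in G_+$ is an involution with $\eps(\gamma)=1$, so
\begin{align*}
Q(C^\gamma,D)=Q(C^{\gamma},(D^{\gamma})^{\gamma})=Q(C,D^\gamma).
\end{align*}
Summing this with weights supported on $G_+$ and using that $Q$ is separately affine yields $Q(\Theta_{\bm w}(C),D)=Q(C,\Theta_{\bm w}(D))$. Thus~\ref{cond:T-self-adjoint} adds no further restriction, and the equivalence follows.

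The main obstacle is the necessity direction of Step~2, namely exhibiting a copula whose eight square-group transforms are linearly independent. I would build this explicitly as a checkerboard copula and then compare masses on the eight disjoint image neighbourhoods.
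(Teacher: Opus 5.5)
Your proof is correct, but it departs from the paper's at two points.

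\emph{Necessity of $w_\pi=w_{\pi\circ\varrho}$.} The paper applies (S2) first, so that only $\{\mathrm{id},\pi,\varrho,\pi\circ\varrho\}$ carries weight. It then uses (S1) with the single element $\delta=\sigma_1$, which gives $(w_\pi-w_{\pi\circ\varrho})(C^\pi-C^{\pi\circ\varrho})=0$. Since $C^{\pi\circ\varrho}=(C^\varrho)^\pi$, any copula that is not radially symmetric finishes the argument; the paper uses an explicit $4\times4$ permutation checkerboard.

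You instead prove the stronger fact that (S1) alone forces $\bm w$ to be constant on conjugacy classes. This also makes explicit why $w_\pi=w_{\pi\circ\varrho}$ is exactly what the sufficiency direction of (S1) needs. The price is a copula $C_0$ whose eight transforms are linearly independent, and as written your construction of $C_0$ is the weak point. The neighbourhood argument needs both $\mu_{C_0}(N)>0$ and $\mu_{C_0}(\gamma(N))=0$ for the seven $\gamma\neq\mathrm{id}$. ``Adding a small extra block of mass'' to a shuffle guarantees neither this nor uniform margins.

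A concrete choice that works is the $5\times5$ permutation checkerboard with $p=(2,3,1,4,5)$ and $N=I_1\times I_2$, where $I_i=((i-1)/5,i/5]$. The other seven images $I_2\times I_1$, $I_5\times I_2$, $I_1\times I_4$, $I_5\times I_4$, $I_4\times I_5$, $I_4\times I_1$, $I_2\times I_5$ all carry no mass. Given your Step~1, though, this construction can be bypassed entirely, as in the paper.

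\emph{Condition (S3).} You derive $Q(C^\gamma,D)=Q(C,D^\gamma)$ for $\gamma\in G_+$ abstractly. You use (B1) for $b_\tau=Q$ together with the fact that these $\gamma$ are involutions with $\eps(\gamma)=1$. The paper instead checks the three non-trivial cases by direct integral manipulation. Your route is shorter, but it imports the classical fact that Kendall's tau is a measure of concordance (in particular $\tau(C^\gamma)=\eps(\gamma)\tau(C)$). The paper's computation is self-contained.
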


When $ w_{\mathrm{id}}=1$, the resulting measure $\kappa_{\Theta_{\bm w}}$ is Kendall's tau.
When $w_{\varrho}=1$, this radial-cross operation yields the measure of concordance $\kappa_{\mathrm{rc}}(C)=Q(C^{\varrho},C)$.
Finally, when $w_{\pi}=1/2$, and thus $w_{\pi\circ \varrho}=1/2$, this diagonal-cross operation leads to  $\kappa_{\mathrm{dc}}(C)
  =
  \{
    Q(C^\pi,C)+Q(C^{\pi\circ \varrho},C)
\}/2$.
By relabeling the weights by $w_\tau
  =
  w_{\mathrm{id}}$, $w_{\mathrm{rc}}
  =
  w_\varrho$ and $w_{\mathrm{dc}}
  =
  w_\pi+w_{\pi\circ \varrho}
  =
  2w_\pi
  =
  2w_{\pi\circ \varrho}$,~\eqref{eq:group-average-operator} yields
\begin{align}
  \Theta_{\boldsymbol w}(C)
  =
  w_\tau C
  +
  w_{\mathrm{rc}}C^\varrho
  +
 w_{\mathrm{dc}} \frac{
    C^\pi+C^{\pi\circ \varrho}
 }{2},
  \qquad
w_\tau,w_{\mathrm{rc}},w_{\mathrm{dc}}\geq0,
\qquad
w_\tau+w_{\mathrm{rc}}+w_{\mathrm{dc}}=1.
  \label{eq:central-orbit-simplex}
\end{align}
The measure of concordance associated with
\eqref{eq:central-orbit-simplex} is
\begin{align*}
  \kappa_{\boldsymbol w}(C)
  =
  w_\tau\tau(C)
  +
  w_{\mathrm{rc}}\kappa_{\mathrm{rc}}(C)
  +
  w_{\mathrm{dc}}\kappa_{\mathrm{dc}}(C).
  % \label{eq:central-orbit-MOC-simplex}
\end{align*}
This measure has exact degree two by
Proposition~\ref{prop:operator-construction}; indeed, for $C_t:=(1-t)\Pi+tM$, we have that $\Theta_{\boldsymbol w}(C_t)
  =
  C_t$ and $\kappa_{\boldsymbol w}(C_t)
  =
  (2/3)\,t+(1/3)\,t^2$.
Its canonical polarization and admissible field are, respectively,
\begin{align*}
  b_{\kappa_{\boldsymbol w}}(C,D)
  &=
  w_\tau Q(C,D)
  +
  w_{\mathrm{rc}}Q(C^\varrho,D)
 +w_{\mathrm{dc}}
  \frac{Q(C^\pi,D)+Q(C^{\pi\circ \varrho},D)}{2}\\
  \nu_C^{\boldsymbol w}
  &=
  4w_\tau\mu_C
  +
  4w_{\mathrm{rc}}\mu_{C^\varrho}
  +
  4w_{\mathrm{dc}}\frac{
    \mu_{C^\pi}+\mu_{C^{\pi\circ \varrho}}
 }{2}.
\end{align*}

\begin{remark}[Special cases]\label{rem:restriction-identities}
If $C$ is exchangeable, that is, $C=C^\pi$, then
  \begin{align*}
    \Theta_{\boldsymbol w}(C)
    &=
    \left(
      w_\tau+\frac{w_{\mathrm{dc}}}{2}
    \right)C
    +
    \left(
      w_{\mathrm{rc}}+\frac{w_{\mathrm{dc}}}{2}
    \right)C^\varrho,\\
    \kappa_{\boldsymbol w}(C)
    &=
    \left(
      w_\tau+\frac{w_{\mathrm{dc}}}{2}
    \right)\tau(C)
    +
    \left(
      w_{\mathrm{rc}}+\frac{w_{\mathrm{dc}}}{2}
    \right)\kappa_{\mathrm{rc}}(C).
  \end{align*}
If $C$ is radially symmetric, that is, $C=C^\varrho$, then
  \begin{align*}
    \Theta_{\boldsymbol w}(C)
    &=
    (w_\tau+w_{\mathrm{rc}})C+w_{\mathrm{dc}}C^\pi,
  \\
    \kappa_{\boldsymbol w}(C)
    &=
    (w_\tau+w_{\mathrm{rc}})\tau(C)
    +
    w_{\mathrm{dc}}Q(C^\pi,C).
  \end{align*}
Consequently, if $C$ is both exchangeable and radially symmetric, then $\kappa_{\mathrm{dc}}(C)
  =
  \kappa_{\mathrm{rc}}(C)
  =
  \tau(C)$, and thus $\Theta_{\boldsymbol w}(C)
    =
    C$ and $\kappa_{\boldsymbol w}(C)
    =
    \tau(C)$.
\end{remark}

Let $(U_1,V_1)$ and $(U_2,V_2)$ be independent copies of $(U,V)\sim C$.
Then 
\begin{align*}
  Q(C,C)
  &=
  \E\left[
    \sgn\{(U_1-U_2)(V_1-V_2)\}
  \right],\\
  Q(C^\varrho,C)
  &=
  \E\left[
    \sgn\{(U_1+U_2-1)(V_1+V_2-1)\}
  \right],\\
  Q(C^\pi,C)
  &=
  \E\left[
    \sgn\{(V_1-U_2)(U_1-V_2)\}
  \right],\\
  Q(C^{\pi\circ \varrho},C)
  &=
  \E\left[
    \sgn\{(V_1+U_2-1)(U_1+V_2-1)\}
  \right].
\end{align*}
Therefore, we obtain the following kernel representation of $\kappa_{\bm w}$.

\begin{corollary}[Kernel representation]\label{cor:kernel:representation}
Let
\begin{align*}
  h_{\boldsymbol w}(u_1,v_1,u_2,v_2)
  &=
  w_\tau\sgn\!\left\{
    (u_1-u_2)(v_1-v_2)
  \right\}
  +
  w_{\mathrm{rc}}\sgn\!\left\{
    (u_1+u_2-1)(v_1+v_2-1)
  \right\}\\
  &\quad
  +
  \frac{w_{\mathrm{dc}}}{2}
  \sgn\!\left\{
    (v_1-u_2)(u_1-v_2)
  \right\}
  +
  \frac{w_{\mathrm{dc}}}{2}
  \sgn\!\left\{
    (v_1+u_2-1)(u_1+v_2-1)
  \right\}.
\end{align*}
Then this kernel is bounded and symmetric, and
\begin{align*}
 \kappa_{\boldsymbol w}(C)= \E\left[
    h_{\boldsymbol w}\left(U_1,V_1,U_2,V_2\right)
  \right],
\end{align*}
where $(U_1,V_1)$ and $(U_2,V_2)$ are independent copies of $(U,V)\sim C$.
\end{corollary}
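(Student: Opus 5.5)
The plan is to derive the kernel representation directly from the formula for $\kappa_{\bm w}$ recorded just before the statement. By Proposition~\ref{prop:group:averaging} and Proposition~\ref{prop:operator-construction},
\begin{align*}
\kappa_{\bm w}(C)=w_\tau Q(C,C)+w_{\mathrm{rc}}Q(C^\varrho,C)+\frac{w_{\mathrm{dc}}}{2}\bigl\{Q(C^\pi,C)+Q(C^{\pi\circ\varrho},C)\bigr\}.
\end{align*}
Each of the four terms is then replaced by the matching expectation of a sign function from the four displayed identities. Linearity of expectation gives $\kappa_{\bm w}(C)=\E[h_{\bm w}(U_1,V_1,U_2,V_2)]$ immediately. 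Boundedness is trivial: $|h_{\bm w}|\leq w_\tau+w_{\mathrm{rc}}+w_{\mathrm{dc}}=1$.

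The real content is therefore the four displayed identities, which I will verify one at a time. Since $Q(A,C)=4\int A\,\dd\mu_C-1$ and $\mu_C$ has uniform margins, ties occur with probability zero. Hence
\begin{align*}
\E[\sgn\{(X_1-X_2)(Y_1-Y_2)\}]=4P(X_1<X_2,\,Y_1<Y_2)-1
\end{align*}
for any pair of independent continuous vectors with uniform margins. Indeed, concordance and discordance probabilities sum to one, and by exchanging the two indices the concordance probability is $2P(X_1<X_2,Y_1<Y_2)$. I will apply this with $(X_1,Y_1)\sim C^\gamma$ and $(X_2,Y_2)\sim C$, which gives $P=\int C^\gamma\,\dd\mu_C$. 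This step needs the generalization with two different copulas. In that case I will compute the concordance probability as $\int C^\gamma\,\dd\mu_C+\int C\,\dd\mu_{C^\gamma}$ and use the symmetry $Q(A,B)=Q(B,A)$ from \citet{fuchs2016biconvex}.

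Next I realize $C^\gamma$ through $\gamma(U_1,V_1)$. For $\varrho$ this is $(1-U_1,1-V_1)$, and the difference $(1-U_1-U_2)$ is, up to sign, $(U_1+U_2-1)$; both coordinates flip sign, so the product is unchanged and the second identity follows. For $\pi$ the realization is $(V_1,U_1)$, giving $(V_1-U_2)(U_1-V_2)$. For $\pi\circ\varrho$ it is $(1-V_1,1-U_1)$, giving the fourth identity by the same double sign flip.

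Symmetry of $h_{\bm w}$ means invariance under swapping $(u_1,v_1)\leftrightarrow(u_2,v_2)$. I will check it termwise. The first term is invariant because both factors change sign. The second is visibly symmetric. For the third, swapping gives $(v_2-u_1)(u_2-v_1)=(u_1-v_2)(v_1-u_2)$, which is the same. The fourth is similar: the two factors simply interchange.

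The main obstacle is keeping the push-forward convention $\mu_{C^\gamma}=\gamma_\#\mu_C$ and the direction of the sign flips consistent. I also need to make sure the symmetric version of $Q$ justifies using $4P-1$ with mixed copulas. Everything else is bookkeeping.
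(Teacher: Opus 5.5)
Your proposal is correct and follows the paper's route. The paper likewise writes $\kappa_{\boldsymbol w}$ as the weighted combination of $Q(C,C)$, $Q(C^\varrho,C)$, $Q(C^\pi,C)$ and $Q(C^{\pi\circ\varrho},C)$, invokes the four sign-expectation identities, and concludes by linearity of expectation; the paper states these identities and the symmetry and boundedness of $h_{\boldsymbol w}$ without proof, and your realization of $C^\gamma$ as the law of $\gamma(U_1,V_1)$, combined with the symmetry $Q(A,B)=Q(B,A)$, supplies that verification.
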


\section{Conclusion}\label{sec:conclusion}
We provide a characterization of all
bivariate measures of concordance of degree at most two based on canonical
polarization.
We show that such measures are characterized by a measure field satisfying several properties including square-group equivariance.
Leveraging this characterization, we provide constructions of measures of concordance of degree two based on copula operators and, as a special case, averaging operators of square-group transforms.

Natural directions for future research include studying the geometry of the class of admissible measure fields.
It would also be of interest to develop analogous complete characterizations for bivariate measures of concordance of degree greater than two and for multivariate measures of concordance of prescribed degree.

\section*{Funding sources}
Takaaki Koike is supported by the Japan Society for the Promotion of Science
(JSPS) KAKENHI grant numbers JP24K00273 and JP26K21178.

\section*{Declaration of competing interests}
The authors declare none.

\section*{Declaration of generative AI and AI-assisted technologies in the manuscript preparation process}
During the preparation of this work, the authors used ChatGPT
(GPT-5.6 Sol; OpenAI) and Claude (Claude Fable 5; Anthropic) to
improve the language, clarity, and readability of the manuscript,
support the refinement and presentation of ideas, and identify
possible errors, gaps, or inconsistencies in mathematical statements
and proofs. The authors independently verified all mathematical
statements and proofs. After using these tools, the authors reviewed
and edited the content as needed and take full responsibility for the
content of the published article.

% \section*{CRediT author statement}
% Conceptualization, T.~Koike; methodology, T.~Koike.; validation, T.~Koike. and H.~Tsunekawa; formal analysis, T.~Koike. and H.~Tsunekawa; investigation, T.~Koike.and H.~Tsunekawa; resources, T.~Koike.; writing---original draft preparation, T.~Koike.; writing---review and editing, T.~Koike. and H.~Tsunekawa; project administration, T.~Koike.; funding acquisition,  T.~Koike.
%All authors have read and agreed to the published version of the manuscript.

\bibliographystyle{apalike}
\bibliography{references}

\newpage
\appendix
\section*{Appendices}

\section{Proofs for Section~\ref{sec:polarization}}\label{sec:proofs}

\subsection{Proof of Proposition~\ref{prop:canonical-polarization}}

We first prove the following lemma.

\begin{lemma}[Quadraticity on copula triangles]
\label{lem:copula-triangle-quadratic}
Let $\kappa:\Cop\to\R$ be a functional of degree at most two.
For any $C_0,C_1,C_2\in\Cop$, there exist constants
$a_0,a_1,a_2,a_{11},a_{12},a_{22}\in\R$ such that
\begin{align*}
  \kappa\bigl((1-s-t)C_0+sC_1+tC_2\bigr)
  &=
  a_0+a_1s+a_2t
  +a_{11}s^2+a_{12}st+a_{22}t^2
\end{align*}
for all $s,t\geq0$ with $s+t\leq1$.
\end{lemma}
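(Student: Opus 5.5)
Define $f(s,t)=\kappa((1-s-t)C_0+sC_1+tC_2)$ on the closed triangle $T=\{s,t\geq0,\ s+t\leq1\}$. Since $\Cop$ is convex, every point of $T$ corresponds to a copula. Moreover, any segment in $T$ maps affinely onto a segment in $\Cop$. Hence, by the degree assumption, the restriction of $f$ to every line segment in $T$, parametrized affinely, is a polynomial of degree at most two. No continuity is assumed, so the argument must be purely algebraic. The plan is to show that a function on a convex planar region that is quadratic along every segment is a quadratic polynomial in two variables.

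\textbf{Step 1.} Work on the horizontal slices. For each fixed $t\in[0,1)$, the map $s\mapsto f(s,t)$ on $[0,1-t]$ is quadratic, so
\[
f(s,t)=\alpha(t)+\beta(t)s+\delta(t)s^2 .
\]
Next fix three nodes, for instance $s\in\{0,\tfrac14,\tfrac12\}$, and restrict to $t\in[0,\tfrac12]$ so that all three points lie in $T$. Along each vertical segment $s=s_i$, the map $t\mapsto f(s_i,t)$ is quadratic. Lagrange interpolation expresses $\alpha(t),\beta(t),\delta(t)$ as fixed linear combinations of $f(s_i,t)$, $i=0,1,2$. Hence $\alpha,\beta,\delta$ are polynomials of degree at most two in $t$, and $f$ agrees on $T\cap\{t\leq\tfrac12\}$ with a polynomial $P(s,t)$ of degree at most two in each variable separately, that is, of total degree at most four.

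\textbf{Step 2.} Cut down the total degree using diagonal segments. Restrict to a line $s=s_0+\lambda r$, $t=t_0+\lambda$ through an interior point of the subregion, for a range of slopes $r$. Along it, $f$ is quadratic in $\lambda$ on an interval. Since $P$ is a polynomial agreeing with $f$ on that interval, the polynomial $\lambda\mapsto P(s_0+\lambda r,t_0+\lambda)$ must have degree at most two. The coefficient of $\lambda^k$ for $k\geq3$ is the homogeneous degree-$k$ part $P_k(r,1)$. It must vanish for all $r$ in an open interval, so $P_k\equiv0$ for $k=3,4$. Thus $P$ has total degree at most two on the lower subregion.

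\textbf{Step 3.} Extend to all of $T$. Every point of $T$ lies on a segment that meets the lower subregion $T\cap\{t<\tfrac12\}$ in a nondegenerate interval; for example, take the vertical segment through the point. On that segment, $f$ is a quadratic in one variable that agrees with the quadratic $P$ on a subinterval. Two quadratics agreeing on an interval are identical, so $f=P$ on the whole segment. Hence $f=P$ on $T$, and the coefficients $a_0,\dots,a_{22}$ are those of $P$.

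The main obstacle is bookkeeping rather than analysis: ensuring that all interpolation nodes and lines stay inside $T$, and that the identity-theorem arguments are applied on nondegenerate intervals. Choosing a small subtriangle with interior room, as above, handles this.
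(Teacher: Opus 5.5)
Your proof is correct and follows essentially the same route as the paper's: you use Lagrange interpolation along coordinate slices to get a polynomial of degree at most two in each variable on a subregion, then restrict to lines to eliminate the cubic and quartic homogeneous parts, then extend to the whole triangle because univariate quadratics agreeing on an interval coincide (you use the lower trapezoid $T\cap\{t\leq\tfrac12\}$ and vertical segments, whereas the paper uses an interior rectangle and segments from an interior point). One small precision: through a base point $(s_0,t_0)\neq(0,0)$ the $\lambda^3$-coefficient is $P_3(r,1)$ plus a contribution from $P_4$. So you should first conclude $P_4\equiv0$ from the $\lambda^4$-coefficient and only then read off $P_3(r,1)$, or alternatively expand $P$ around the base point as the paper does.
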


\begin{proof}[Proof of Lemma~\ref{lem:copula-triangle-quadratic}]
Let $\Delta_2
  =
  \left\{
    (s,t)\in\R^2:
    s\geq0,\ t\geq0,\ s+t\leq1
  \right\}$ and define $\Psi:\Delta_2\to \Cop$ and $g:\Delta_2\to \R$, respectively, by
\begin{align*}
  \Psi(s,t)
  &=
  (1-s-t)C_0+sC_1+tC_2,\qquad (s,t)\in\Delta_2,\\
  g(s,t)
  &=
  \kappa\bigl(\Psi(s,t)\bigr),
  \qquad (s,t)\in\Delta_2.
\end{align*}
For any $\bm x,\bm y\in\Delta_2$ and $\lambda\in[0,1]$,
the affinity of $\Psi$ gives $\Psi\bigl((1-\lambda)\bm x+\lambda\bm y\bigr)
  =
  (1-\lambda)\Psi(\bm x)+\lambda\Psi(\bm y)$.
Consequently, since $\kappa$ is of degree at most two and $\Psi(\bm x),\Psi(\bm y)\in \Cop$, we have that
\begin{align}
g\bigl((1-\lambda)\bm x+\lambda\bm y\bigr)
  =
  \kappa\bigl(
    (1-\lambda)\Psi(\bm x)+\lambda\Psi(\bm y)
  \bigr),\qquad \lambda \in [0,1],
  \label{eq:g-on-segment}
\end{align}
is a univariate polynomial in $\lambda$ of degree at most two.

Choose $a<b$ and $c<d$ such that $[a,b]\times[c,d]
  \subset\operatorname{int}(\Delta_2)$,
and set $I=(a,b)$ and $J=(c,d)$.
For each fixed $t\in J$, every $s\in I$ can be written as
$(s,t)
  =
  (1-\lambda)(a,t)+\lambda(b,t)$, where $\lambda=(s-a)/(b-a)$.
  Thus, applying \eqref{eq:g-on-segment} with
$\bm x=(a,t)$ and $\bm y=(b,t)$ shows that
$s\mapsto g(s,t)$ is a univariate polynomial of degree at most two on $I$. 
Similarly, for every fixed $s\in I$, the function
$t\mapsto g(s,t)$ is a univariate polynomial of degree at most two
on $J$.

Choose distinct points $u_0,u_1,u_2\in I$ and  $v_0,v_1,v_2\in J$.
For $i,j\in\{0,1,2\}$, define the Lagrange basis polynomials:
\begin{align*}
  L_i(s)
  =
  \prod_{\substack{\ell=0\\\ell\neq i}}^2
  \frac{s-u_\ell}{u_i-u_\ell}\qquad\text{and}\qquad  
  M_j(t)
  =
  \prod_{\substack{\ell=0\\\ell\neq j}}^2
  \frac{t-v_\ell}{v_j-v_\ell},
  \qquad s,t\in\R.
\end{align*}
Define a bivariate polynomial $P:\R^2\to\R$ by
\begin{align*}
  P(s,t)
  &=
  \sum_{i=0}^2\sum_{j=0}^2
  g(u_i,v_j)L_i(s)M_j(t).
\end{align*}
For every $(s,t)\in I\times J$, successive Lagrange interpolation
in the two coordinate variables gives
\begin{align*}
  g(s,t)
  =
  \sum_{i=0}^2L_i(s)g(u_i,t)=
  \sum_{i=0}^2\sum_{j=0}^2
  g(u_i,v_j)L_i(s)M_j(t)
  =
  P(s,t).
\end{align*}
Consequently, we have that
\begin{align}
  g=P
  \qquad\text{on }I\times J.
  \label{eq:g-equals-P-rectangle}
\end{align}
The polynomial $P$ has degree at most two in each variable and hence
has total degree at most four.

Fix $\bm z\in I\times J$. Since $P$ is of degree at most four, we can uniquely write
\begin{align*}
  P(\bm z+\bm h)
  &=
  \sum_{k=0}^4H_k(\bm h),
  \qquad \bm h\in\R^2,
\end{align*}
where each $H_k:\R^2\to\R$, depending on $\bm z$, is a homogeneous polynomial of degree $k$ in the sense that $H_k(\tau\bm h)=\tau^k H_k(\bm h)$ for every $\tau\in \R$ and $\bm h\in \R^2$.

Fix $\bm h\in\R^2$ in addition to $\bm z\in I\times J$. Since $I\times J$ is open and contains $\bm z$,
there exists $\varepsilon>0$ such that $\bm z+\tau\bm h\in I\times J$ for all $|\tau|\leq\varepsilon$.
Applying \eqref{eq:g-on-segment} with $\bm x=\bm z-\varepsilon\bm h$ and $\bm y=\bm z+\varepsilon\bm h$ shows that $\lambda
  \mapsto
  g(\bm z+(2\lambda-1)\varepsilon\bm h)$, $\lambda \in [0,1]$, is a univariate polynomial of degree at most two. 
  By setting $\tau=(2\lambda-1)\varepsilon \in [-\varepsilon,\varepsilon]$, we have that $\tau
  \mapsto
  g(\bm z+\tau\bm h)$, $|\tau|\le \varepsilon$, is a univariate polynomial in $\tau$ of degree at most two.
By \eqref{eq:g-equals-P-rectangle}, the same is true for $P(\bm z+\tau\bm h)
  =
  \sum_{k=0}^4\tau^kH_k(\bm h)$.
This implies that $H_3(\bm h)=H_4(\bm h)=0$.
Since $\bm h\in\R^2$ was arbitrary, we have $H_3\equiv0$ and
$H_4\equiv0$. Hence $P$ has total degree at most two.

It remains to extend \eqref{eq:g-equals-P-rectangle} from $I\times J$ to $\Delta_2$. 
With $\bm z\in I\times J$ still being fixed, we fix $\bm x\in\Delta_2$ and define $g_{\bm x}:[0,1]\to\R$ and  $P_{\bm x}:[0,1]\to\R$ by
\begin{align*}
  g_{\bm x}(\lambda)
  =
  g\bigl((1-\lambda)\bm z+\lambda\bm x\bigr)\qquad\text{and}\qquad
  P_{\bm x}(\lambda)
  =
  P\bigl((1-\lambda)\bm z+\lambda\bm x\bigr),
  \qquad\lambda\in[0,1].
\end{align*}
By \eqref{eq:g-on-segment}, $g_{\bm x}$ is a univariate polynomial
of degree at most two. Since $P$ has total degree at most two,
$P_{\bm x}$ is also a univariate polynomial of degree at most two.

Since $I\times J$ is open, there exists
$\delta>0$ such that $(1-\lambda)\bm z+\lambda\bm x
  \in I\times J$ for all $0\leq\lambda\leq\delta$.
It follows from \eqref{eq:g-equals-P-rectangle} that
$g_{\bm x}=P_{\bm x}$ on $[0,\delta]$. Since these are univariate
polynomials, they must be identical. Therefore, evaluating at $\lambda=1$ gives $g(\bm x)=P(\bm x)$, that is, $g=P$ on $\Delta_2$.
Consequently, the desired statement follows since $P$ has total degree at most two.
\end{proof}

Using this lemma, we prove the main proposition in the following.

\begin{proof}[Proof of Proposition~\ref{prop:canonical-polarization}]
\hspace{0mm}
\begin{enumerate}[label=\textup{(\roman*)},leftmargin=*,widest=ii]
\item 

We first verify joint continuity. Let
$(C_n,D_n)\to(C,D)$ in the product uniform topology, that is, $\|C_n-C\|_\infty\to0$ and $\|D_n-D\|_\infty\to0$.
Then
\begin{align*}
  \left\|
    \frac{C_n+D_n}{2}-\frac{C+D}{2}
  \right\|_\infty
  \leq
  \frac{1}{2}\|C_n-C\|_\infty
  +
  \frac{1}{2}\|D_n-D\|_\infty
  \rightarrow0.
\end{align*}
Therefore, by the continuity of $\kappa$ and the definition of the
canonical polarization, we conclude that $b_\kappa$ is jointly continuous.

Symmetry follows directly from the definition of
$b_\kappa$.

To prove affinity in each argument, fix $C_0,C_1,D\in\Cop$.
Let $\Delta_2$, $\Psi$, and $g$ be as in the proof of
Lemma~\ref{lem:copula-triangle-quadratic}, with $C_2=D$.
Thus, $g=\kappa\circ\Psi$, and Lemma~\ref{lem:copula-triangle-quadratic}
gives constants
$a_0,a_1,a_2,a_{11},a_{12},a_{22}\in\R$ such that
\begin{align}
  g(s,t)
  &=
  a_0+a_1s+a_2t
  +a_{11}s^2+a_{12}st+a_{22}t^2,
  \qquad (s,t)\in\Delta_2.
  \label{eq:local-quadratic-representation}
\end{align}
For $\lambda\in[0,1]$, let $C_\lambda
  =
  (1-\lambda)C_0+\lambda C_1$.
By the definition of $\Psi$, we have that  $C_\lambda
  =
  \Psi(\lambda,0)$, $D
  =
  \Psi(0,1)$ and $(C_\lambda+D)/2=\Psi\left(\lambda/2,1/2
  \right)$.
Consequently, substituting
\eqref{eq:local-quadratic-representation} into the definition of
$b_\kappa$ gives
\begin{align*}
  b_\kappa(C_\lambda,D)
  &=
  2g\left(\frac{\lambda}{2},\frac12\right)
  -\frac{g(\lambda,0)+g(0,1)}{2}\\
  &=
  2\left(
    a_0+\frac{\lambda}{2}a_1+\frac12a_2
    +\frac{\lambda^2}{4}a_{11}
    +\frac{\lambda}{4}a_{12}
    +\frac14a_{22}
  \right)\\
  &\quad
  -\frac12
  \left(
    a_0+\lambda a_1+\lambda^2a_{11}
  \right)
  -\frac12
  \left(
    a_0+a_2+a_{22}
  \right)\\
  &=
  a_0+\frac{a_2}{2}
  +\frac{\lambda}{2}(a_1+a_{12}).
\end{align*}
In particular, the cases $\lambda=0,1$ yield
\begin{align*}
  b_\kappa(C_0,D)
  =
  a_0+\frac{a_2}{2}\qquad\text{and}\qquad
  b_\kappa(C_1,D)
  =
  a_0+\frac{a_2}{2}
  +\frac{a_1+a_{12}}{2}.
\end{align*}
Consequently, it can be directly checked that
$b_\kappa(C_\lambda,D)
  =
  (1-\lambda)b_\kappa(C_0,D)
  +\lambda b_\kappa(C_1,D)$.
Since $C_0,C_1,D\in\Cop$ and $\lambda\in[0,1]$ were arbitrary,
$b_\kappa$ is affine in its first argument. By symmetry, it is also
affine in its second argument.

\item
Taking $D=C$ in the definition of $b_\kappa$ gives
\begin{align*}
  b_\kappa(C,C)
  =
  2\kappa\left(\frac{C+C}{2}\right)
  -\frac{\kappa(C)+\kappa(C)}{2}
  =
  \kappa(C).
\end{align*}

\item
Fix $C,D\in\Cop$ and $t\in[0,1]$.
Set $E_t=
  (1-t)C+tD \in \Cop$.
By (ii), we have that $\kappa(E_t)
  =
  b_\kappa(E_t,E_t)$.
Applying affinity in the first argument and then in the second
argument gives
\begin{align*}
  b_\kappa(E_t,E_t)
  &=
  (1-t)b_\kappa(C,E_t)
  +t b_\kappa(D,E_t)\\
  &=
  (1-t)
  \left\{
    (1-t)b_\kappa(C,C)
    +t b_\kappa(C,D)
  \right\}
  +t
  \left\{
    (1-t)b_\kappa(D,C)
    +t b_\kappa(D,D)
  \right\}\\
  &=
  (1-t)^2b_\kappa(C,C)
  +t(1-t)
  \left\{
    b_\kappa(C,D)+b_\kappa(D,C)
  \right\}
  +t^2b_\kappa(D,D).
\end{align*}
Using (i) and (ii), we
obtain~\eqref{eq:quadratic-segment}.

\item
Let $b:\Cop\times\Cop\to\R$ be any symmetric separately affine
map satisfying $b(C,C)=\kappa(C)$,
$C\in\Cop$.
For $C,D\in\Cop$, separate affinity gives
\begin{align*}
  \kappa\left(\frac{C+D}{2}\right)
  =
  b\left(
    \frac{C+D}{2},
    \frac{C+D}{2}
  \right)=
  \frac14b(C,C)
  +\frac14b(C,D)
  +\frac14b(D,C)
  +\frac14b(D,D).
\end{align*}
Since $b$ is symmetric and its diagonal is $\kappa$, we have that
\begin{align*}
  \kappa\left(\frac{C+D}{2}\right)
  &=
  \frac14\kappa(C)
  +\frac12b(C,D)
  +\frac14\kappa(D).
\end{align*}
Solving for $b(C,D)$ yields
\begin{align*}
  b(C,D)
  =
  2\kappa\left(\frac{C+D}{2}\right)
  -\frac{\kappa(C)+\kappa(D)}{2}=
  b_\kappa(C,D).
\end{align*}
Since $C,D\in\Cop$ were arbitrary, we have $b=b_\kappa$.\qedhere
\end{enumerate}
\end{proof}

\subsection{Proposition~\ref{prop:characterization:b}}

\begin{lemma}[Diagonal versus sectionwise monotonicity]\label{lem:monotonicity-equivalence}
Let $\kappa:\Cop\to\R$ be a continuous functional of degree at most two with canonical polarization $b_\kappa$.  Then the following statements are equivalent:
\begin{enumerate}[label=\textup{(\roman*)},leftmargin=*,widest=ii]
  \item $\kappa$ is $\preceq$-monotone;
  \item for every $C\in\Cop$, the section $D\mapsto b_\kappa(C,D)$ is $\preceq$-monotone.
\end{enumerate}
\end{lemma}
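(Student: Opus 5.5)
The plan is to prove both implications by elementary algebra with the canonical polarization, using only Proposition~\ref{prop:canonical-polarization}: symmetry and separate affinity of $b_\kappa$ in (i), the diagonal identity $b_\kappa(C,C)=\kappa(C)$ in (ii), and the segment formula~\eqref{eq:quadratic-segment} in (iii). I also use that the concordance order $\preceq$ is the pointwise order, so it is preserved under mixing with a common copula.

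For (ii)$\Rightarrow$(i), fix $C\preceq D$. Using the diagonal identity and symmetry of $b_\kappa$, I telescope
\begin{align*}
  \kappa(D)-\kappa(C)
  =\bigl\{b_\kappa(D,D)-b_\kappa(D,C)\bigr\}
  +\bigl\{b_\kappa(C,D)-b_\kappa(C,C)\bigr\}.
\end{align*}
The first bracket is non-negative by monotonicity of the section $E\mapsto b_\kappa(D,E)$ applied to $C\preceq D$. The second bracket is non-negative by monotonicity of the section $E\mapsto b_\kappa(C,E)$ applied to the same pair. Hence $\kappa(C)\leq\kappa(D)$.

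For (i)$\Rightarrow$(ii), fix $C\in\Cop$ and $D_1\preceq D_2$. For $t\in(0,1]$, the copulas $(1-t)C+tD_1$ and $(1-t)C+tD_2$ satisfy $(1-t)C+tD_1\preceq(1-t)C+tD_2$, because the order is pointwise. Monotonicity of $\kappa$ and~\eqref{eq:quadratic-segment} then give
\begin{align*}
  0\leq \kappa((1-t)C+tD_2)-\kappa((1-t)C+tD_1)
  =2t(1-t)\bigl\{b_\kappa(C,D_2)-b_\kappa(C,D_1)\bigr\}
  +t^2\bigl\{\kappa(D_2)-\kappa(D_1)\bigr\},
\end{align*}
since the $(1-t)^2\kappa(C)$ terms cancel. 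Dividing by $t>0$ and letting $t\downarrow0$ yields $b_\kappa(C,D_1)\leq b_\kappa(C,D_2)$.

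There is no genuine obstacle here. The only delicate point is the direction (i)$\Rightarrow$(ii): monotonicity of $\kappa$ alone does not directly control a mixed term. The step that resolves this is localizing near $C$ along segments, so that the cross term $2t(1-t)b_\kappa$ dominates the $t^2$ term as $t\downarrow0$.
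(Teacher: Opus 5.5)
Your proposal is correct and follows essentially the same route as the paper: for (i)$\Rightarrow$(ii) you mix both copulas with a common $C$, apply \eqref{eq:quadratic-segment}, divide by $t$ and let $t\downarrow0$, and for (ii)$\Rightarrow$(i) your telescoping identity is just the paper's chain $\kappa(D)=b_\kappa(D,D)\geq b_\kappa(D,C)=b_\kappa(C,D)\geq b_\kappa(C,C)=\kappa(C)$ written as a sum of two non-negative brackets.
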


\begin{proof}
Assume first that $\kappa$ is $\preceq$-monotone.  Fix $C,D_1,D_2\in\Cop$ with $D_1\preceq D_2$.  For every $t\in[0,1]$, we have $(1-t)C+tD_1\preceq(1-t)C+tD_2$.
Hence Proposition~\ref{prop:canonical-polarization}~\textup{(iii)} yields
\begin{align*}
  0
  &\leq
  \kappa((1-t)C+tD_2)-\kappa((1-t)C+tD_1)\\
  &=2t(1-t)\{b_\kappa(C,D_2)-b_\kappa(C,D_1)\}
    +t^2\{\kappa(D_2)-\kappa(D_1)\}.
\end{align*}
  Dividing by $t>0$ and letting $t\downarrow0$ leads to $b_\kappa(C,D_1)\leq b_\kappa(C,D_2)$.

Conversely, suppose that every section is $\preceq$-monotone, and let
$C\preceq D$. Symmetry and monotonicity in each argument give
\begin{align*}
  \kappa(D)
  =
  b_\kappa(D,D)
  \geq
  b_\kappa(D,C)
  =
  b_\kappa(C,D)
  \geq
  b_\kappa(C,C)
  =
  \kappa(C).
\end{align*}
Hence $\kappa$ is $\preceq$-monotone.
\end{proof}

\begin{proof}[Proof of Proposition~\ref{prop:characterization:b}]
Suppose first that $\kappa$ is a measure of concordance.
Recall that $\kappa(C^\gamma)
  =
  \eps(\gamma)\kappa(C)$, $C\in\Cop$, $\gamma\in\mathcal D_4$.
Since the action of $\mathcal D_4$ on $\Cop$ is affine, we have that $\{(C+D)/2\}^\gamma
  =
  (C^\gamma+D^\gamma)/2$.
It thus follows that
\begin{align*}
  b_\kappa(C^\gamma,D^\gamma)
  &=
  2\kappa\left(\frac{C^\gamma+D^\gamma}{2}\right)
  -\frac{\kappa(C^\gamma)+\kappa(D^\gamma)}{2}\\
  &=
  \eps(\gamma)
  \left\{
    2\kappa\left(\frac{C+D}{2}\right)
    -\frac{\kappa(C)+\kappa(D)}{2}
  \right\}\\
  &=
  \eps(\gamma)b_\kappa(C,D),
\end{align*}
which proves \ref{cond:B1}. By~\ref{item:monotone} and
Lemma~\ref{lem:monotonicity-equivalence}, Condition \ref{cond:B2}
holds. Finally, we have from~\ref{item:M} that $b_\kappa(M,M)
  =
  \kappa(M)
  =
  1$, which is \ref{cond:B3}.

Conversely, suppose that \ref{cond:B1}--\ref{cond:B3} hold.
Using the diagonal identity $b_\kappa(C,C)=\kappa(C)$ in
\ref{cond:B1}, we obtain
\begin{align*}
  \kappa(C^\gamma)
  =
  b_\kappa(C^\gamma,C^\gamma)=
  \eps(\gamma)b_\kappa(C,C)
  =
  \eps(\gamma)\kappa(C),
  \qquad C\in\Cop,\quad \gamma\in\mathcal D_4.
\end{align*}
Taking $\gamma=\pi$ gives~\ref{item:pi:sym}, whereas taking $\gamma=\sigma_1$ gives~\ref{item:sigma:sym}.
Condition \ref{cond:B2} and
Lemma~\ref{lem:monotonicity-equivalence} yield~\ref{item:monotone}. Moreover, \ref{cond:B3} leads to $\kappa(M)
  =
  b_\kappa(M,M)
  =
  1$, which is~\ref{item:M}. The assumed continuity of $\kappa$ with
respect to the uniform topology is precisely axiom~\ref{item:conv}.
Thus $\kappa$ is a measure of concordance.
\end{proof}

\subsection{Proposition~\ref{prop:canonical-polarization-bijection}}

\begin{proof}[Proof of Proposition~\ref{prop:canonical-polarization-bijection}]

It is straightforward to check from Propositions~\ref{prop:canonical-polarization} and~\ref{prop:characterization:b} that $\mathsf P$ maps $\mathcal K_d$ into $\mathcal B_d$.

To show the converse, let $b\in\mathcal B_d$ and define $\kappa_b:\Cop\to\R$ by 
$\kappa_b(C)
  =
  b(C,C)$, $C\in\Cop$.
Then continuity of this functional follows from the joint continuity of $b$.
Since $b \in \mathcal B_d$, we have that $\kappa_b$ is of degree exactly $d$.
In addition, Proposition~\ref{prop:canonical-polarization}~(iv) yields $b=b_{\kappa_b}$.
Since the map $b$ satisfies \ref{cond:B1}--\ref{cond:B3}, it follows from Proposition~\ref{prop:characterization:b}  that $\kappa_b$ is a
measure of concordance, that is, $\mathsf D(b)
  =
  \kappa_b
  \in\mathcal K_d$.

Note that $(\mathsf D\circ\mathsf P)(\kappa)(C)
  =
  b_\kappa(C,C)
  =
  \kappa(C)$ and $
 (\mathsf P\circ\mathsf D)(b)
  =
  b_{\kappa_b}
  =
  b$.
Consequently, $\mathsf P$ and $\mathsf D$ are inverse bijections between
$\mathcal K_d$ and $\mathcal B_d$ for each $d\in\{1,2\}$, and hence also
between $\mathcal K_{1,2}$ and $\mathcal B_{1,2}$.
\end{proof}

\section{Proofs for Section~\ref{sec:characterization:field}}

\subsection{Integral representation for affine functionals}

We shall use the following integral representation of continuous
affine functionals. Related representations for degree-one measures
of concordance can be found in \citet{edwards2005} and
\citet{edwardstaylor2009}. 
Since this formulation is
not stated explicitly in either reference, we provide a self-contained proof
in Section~\ref{sec:proof:section:representation}.

\begin{theorem}[Integral representation of monotone affine functionals]
\label{thm:affine-section-representation}
Let $\Phi:\Cop\to\R$ be a continuous affine functional. Then $\Phi$ is
$\preceq$-monotone if and only if there exists 
$\nu_\Phi\in\Meas_w^+$ such that
\begin{align}
  \Phi(D)
  &=
  \Phi(\Pi)
  +\int_{(0,1)^2}(D-\Pi)\,\dd\nu_\Phi,
  \qquad D\in\Cop.
  \label{eq:affine-section-representation}
\end{align}
The representing measure $\nu_\Phi$ is unique and satisfies
\begin{align}
  \int_{(0,1)^2}w\,\dd\nu_\Phi
  &=
  \Phi(M)-\Phi(W).
  \label{eq:section-weighted-mass}
\end{align}
\end{theorem}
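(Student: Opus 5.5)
The easy direction is sufficiency. Suppose \eqref{eq:affine-section-representation} holds with $\nu_\Phi\in\Meas_w^+$, and let $D_1\preceq D_2$. Then $D_2-D_1\geq0$ pointwise, and since $|D_i-\Pi|\leq w$ both integrals are finite. Hence $\Phi(D_2)-\Phi(D_1)=\int(D_2-D_1)\,\dd\nu_\Phi\geq0$, which gives monotonicity.

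For necessity, I would extend $\Phi-\Phi(\Pi)$ to a positive linear functional and represent it by a measure. Define $L(f)$ on the linear span $V=\operatorname{span}\{C-\Pi: C\in\Cop\}$ by $L(\sum_i a_i(C_i-\Pi))=\sum_i a_i(\Phi(C_i)-\Phi(\Pi))$.

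Well-definedness comes from affinity. Any relation $\sum a_i(C_i-\Pi)=\sum b_j(D_j-\Pi)$ can be rearranged, after moving negative coefficients across, into an equality of two positive combinations. Normalizing then gives an equality of two convex combinations of copulas, and affinity of $\Phi$ on convex combinations yields the corresponding equality for $L$.

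Monotonicity of $\Phi$ gives $L(f)\ge0$ whenever $f=\lambda(D_2-D_1)\geq0$. A general element of $V$ has the form $f=\alpha(C_1-C_2)$ after normalization, so positivity of $L$ on nonnegative elements of $V$ follows.

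Next I would build the measure from mixed second differences. For rectangles $R=[u_1,u_2]\times[v_1,v_2]\subset(0,1)^2$, the standard way (as in Edwards--Taylor) is to exhibit copulas that differ by a ``tent'' function supported near $R$. The natural candidates are shuffle or patchwork copulas that move mass within $R$ to be comonotone versus countermonotone; their difference is a nonnegative function in $V$. Using these, define a premeasure via $\nu(R)$ as a limit of $L$ applied to approximating functions.

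A cleaner route is an integration-by-parts dual. The map $D\mapsto\int \phi\,\dd\mu_D$ for smooth compactly supported $\phi$ equals $\int D\,\partial_{uv}\phi$ when boundary terms vanish. Dually, I want $\Phi(D)-\Phi(\Pi)=\int (D-\Pi)\,\dd\nu$, meaning $\nu$ acts on $\Delta$-increments. I would show that $L$ extends to a positive functional on $C_c((0,1)^2)$ by density: the cone $\{C_1-C_2\geq0\}$ is rich enough that every nonnegative $\phi\in C_c$ is approximated from above and below by elements of $V$ scaled, with controlled error. Riesz representation then gives a Radon measure $\nu_\Phi$ on $(0,1)^2$.

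To pass from compactly supported test functions to all $D-\Pi$, I would use $|D-\Pi|\leq w$ with $M-\Pi\geq0$ and $\Pi-W\geq0$. Monotone convergence on exhausting compacts, together with continuity of $\Phi$ and uniform approximation of $D$ by copulas agreeing with $\Pi$ near the boundary (for example $(1-\epsilon)D+\epsilon\Pi$ patched near $\partial$), extends the identity. In particular, $\int w\,\dd\nu_\Phi=\int(M-W)\,\dd\nu_\Phi=\Phi(M)-\Phi(W)<\infty$, which is \eqref{eq:section-weighted-mass} and puts $\nu_\Phi\in\Meas_w^+$.

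Uniqueness holds because two such measures would agree on all $D-\Pi$. Differences of copulas already separate rectangle masses: taking second differences recovers $\nu(R)$ through $\int\id_R$-type combinations. By $\pi$–$\lambda$ and local finiteness, the two measures then coincide.

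The main obstacle is the density and positivity step, namely producing enough nonnegative elements $C_1-C_2$ to approximate arbitrary nonnegative compactly supported test functions and hence define $\nu_\Phi$. I would first try explicit constructions: checkerboard or patchwork copulas on fine grids, whose differences from $\Pi$-patches give piecewise bilinear bumps, combined with Stone–Weierstrass-type approximation.
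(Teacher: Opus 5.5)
Your route is genuinely different from the paper's and is viable: you treat $\Phi-\Phi(\Pi)$ as a positive linear functional $L$ on $V=\operatorname{span}\{C-\Pi:C\in\Cop\}$ and apply Riesz on $(0,1)^2$. However, the step that carries the analytic content is not established.

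\textbf{The gap: the boundary step.} Riesz only yields a Radon $\nu$ with $L(g)=\int g\,\dd\nu$ for those $g\in V$ vanishing near $\partial[0,1]^2$. Reaching arbitrary $D-\Pi$ is exactly where continuity of $\Phi$ is indispensable. For instance, a Banach limit of $2^nD(2^{-n},2^{-n})$ is affine, monotone and equals $1$ at $M$, yet it vanishes whenever $D=\Pi$ near the boundary, so its Riesz measure is $0$.

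You therefore need copulas $D_k\to D$ with $D_k=\Pi$ near $\partial[0,1]^2$, and ``$(1-\epsilon)D+\epsilon\Pi$ patched near $\partial$'' does not supply them. The function $(1-\epsilon)(D-\Pi)$ does not vanish near the boundary, and a naive cutoff destroys $2$-increasingness. On $\{\delta/2<u<\delta,\ \delta\le v\le 1-\delta\}$, where $W=0$, the ramp $a(u)=2u/\delta-1$ gives $\Pi+(1-\epsilon)a(u)(W-\Pi)=uv\{1-(1-\epsilon)a(u)\}$. Its mixed density $1-(1-\epsilon)(4u/\delta-1)$ is negative near $u=\delta$ once $\epsilon<2/3$.

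The device that works is the Edwards--Taylor rescaling $\Box_k$ used in the paper's boundary-tightness lemma. It satisfies $\Box_k(D)=\Pi$ off $R_k$, and on $R_k$ one has $\Box_k(D)(u,v)-uv=(1-2h_k)^2\{D(u',v')-u'v'\}$ with $u'=(u-h_k)/(1-2h_k)$, $v'=(v-h_k)/(1-2h_k)$; moreover $\Box_k(D)\to D$ uniformly. The argument then closes as follows.
\begin{enumerate}
\item Since $\Box_k(D)-\Pi$ vanishes off $R_k$, you get $\Phi(\Box_k(D))-\Phi(\Pi)=\int(\Box_k(D)-\Pi)\,\dd\nu$.
\item Fatou applied to $\Box_k(M)-\Pi\geq0$ and $\Pi-\Box_k(W)\geq0$, together with continuity, gives $\int w\,\dd\nu\leq\Phi(M)-\Phi(W)$, so $\nu\in\Meas_w^+$. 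This has to come before the general identity, not as a consequence of it.
\item Then $|\Box_k(D)-\Pi|\leq w$, dominated convergence and $\Phi(\Box_k(D))\to\Phi(D)$ give \eqref{eq:affine-section-representation}, and taking $D=M,W$ gives \eqref{eq:section-weighted-mass}.
\end{enumerate}

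\textbf{The step you flagged is short.} For $\phi\in C_c^2((0,1)^2)$ and $\epsilon>0$ with $\epsilon\|\partial_{uv}\phi\|_\infty\leq1$, the function $\Pi+\epsilon\phi$ is a copula, so $C_c^2((0,1)^2)\subset V$. Now let $g\in V$ vanish near the boundary, and let $\psi\in C_c^2((0,1)^2)$ with $0\leq\psi\leq1$ equal $1$ on the support of $g$. Then $\|g\|_\infty\psi\pm g$ are nonnegative elements of $V$, so $|L(g)|\leq\|g\|_\infty L(\psi)$. This sup-norm control is what lets you extend $L$ positively to $C_c((0,1)^2)$ by mollification.

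The same inclusion gives uniqueness at once: two representing measures are Radon and integrate every element of $C_c^2((0,1)^2)$ identically. This is cleaner than your indicator/second-difference argument.

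\textbf{Comparison with the paper.} The paper proves a finite representation with nonnegative weights on dyadic checkerboards; its bumps $\Pi\pm\eps_nH_{ij,n}$ are your piecewise bilinear bumps. It then uses weak compactness of the $w$-weighted point masses and portmanteau/cutoff arguments. With the boundary step repaired, your proof replaces all of this with a single appeal to Riesz. Both arguments rely on the same $\Box_k$ construction to prevent mass from escaping to $\partial[0,1]^2$.
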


\subsection{Proof of Theorem~\ref{thm:affine-section-representation}}
\label{sec:proof:section:representation}

Sufficiency part is straightforward to prove by the construction~\eqref{eq:affine-section-representation}.
Moreover,~\eqref{eq:section-weighted-mass} follows from~\eqref{eq:affine-section-representation} since
\begin{align*}
  \Phi\left(M\right)-\Phi\left(W\right)
  &=
  \int_{(0,1)^2}
  \left(M-W\right)\,\dd\nu_\Phi
  =
  \int_{(0,1)^2} w\,\dd\nu_\Phi.
\end{align*}
Uniqueness of $\nu_\Phi$ is also a consequence from~\eqref{eq:affine-section-representation}. 
Indeed, if $\nu_\Phi,\widetilde\nu_\Phi\in\Meas_w^+$ both satisfy
\eqref{eq:affine-section-representation}, then
\begin{align*}
  \int_{(0,1)^2}(D-\Pi)\,\dd\nu_\Phi
  &=
  \int_{(0,1)^2}(D-\Pi)\,\dd\widetilde\nu_\Phi,
  \qquad D\in\Cop.
\end{align*}
Since both measures are regular, \citet[Lemma~0.4]{edwards2005} yields
$\nu_\Phi=\widetilde\nu_\Phi$.

We prove the necessity part of Theorem~\ref{thm:affine-section-representation} with the approximation techniques used in~\citet{edwardstaylor2009}. 
We consider the checkerboard approximation of copulas over the grid $\mathcal V_n=\{0,h_n,2h_n,\dots,1\}^2$, where $m_n=2^n$ and $h_n=1/m_n$, $n\in \mathbb{N}$.
For $i,j\in\{0,1,\dots,m_n-1\}$ and a bivariate $\R$-valued function $F$, write the increment of $F$ by
\begin{align*}
  \Delta_{ij,n}F
  =
  F\left(\frac{i+1}{m_n},\frac{j+1}{m_n}\right)
  -
  F\left(\frac{i}{m_n},\frac{j+1}{m_n}\right)
  -
  F\left(\frac{i+1}{m_n},\frac{j}{m_n}\right)
  +
  F\left(\frac{i}{m_n},\frac{j}{m_n}\right).
\end{align*}
Let $S$ be a subcopula with domain $\mathcal V_n$; see~\citet[][Section~2.3.1]{durantesempi2016} for the definition.
We then denote by $C_S$ the \emph{checkerboard copula} associated with $S$ and the grid $\mathcal V_n$.
As in~\citet[][Theorem~4.1.3]{durantesempi2016}, for every $(u,v)\in[i\,h_n,(i+1)\,h_n]\times [j\,h_n,(j+1)\,h_n]$ with $i,j\in\{0,1,\dots,m_n-1\}$, we have
\begin{align}\label{eq:checkerboard:definition}
C_S(u,v)=S\left(i\,h_{n},j\,h_n\right)+\sum_{(k,\ell)\in\mathcal J_{ij}}(\Delta_{k\ell,n}S)\,f_{k,n}(u)f_{\ell,n}(v),\end{align}
where $\mathcal J_{ij}=\{(k,\ell)\in \{0,\dots,i\}\times \{0,\dots,j\}:k=i\text{ or }\ell=j\}$ and $f_{k,n}:[0,1]\to[0,1]$, $k\in\{0,\dots,m_n-1\}$, is defined by
\begin{align*}
f_{k,n}(x)=\begin{cases}
    0, & x<k\,h_n,\\
    (x-k\,h_n)/h_n, & k\,h_n\le x  \le (k+1)\,h_n,\\
    1, & (k+1)\,h_n<x.\\
\end{cases}
\end{align*}
It is shown in~\citet[Theorem~4.1.2]{durantesempi2016} that $C_S$ is indeed a copula. 
Note that $C_S$ agrees with $S$ on $\mathcal V_n$.
For every $(u,v)\in[0,1]^2$, the value
$C_S(u,v)$ can be obtained by bilinear interpolation.
To see this, fix
$i,j\in\left\{0,\ldots,m_n-1\right\}$ and $(u,v)\in[i\,h_n,(i+1)\,h_n]\times [j\,h_n,(j+1)\,h_n]$.
Set $\alpha
  =
  m_n\left(u-i/m_n\right)
  =
  m_n u-i\in[0,1]$ and $\beta
  =
  m_n\left(v-j/m_n\right)
  =
  m_n v-j\in[0,1]$.
With any empty sum understood to be zero,
\eqref{eq:checkerboard:definition} yields
\begin{align}
  C_S(u,v)
  &=
  S\left(\frac{i}{m_n},\frac{j}{m_n}\right)
  +
  \alpha\beta\,\Delta_{ij,n}S
  +
  \alpha
  \sum_{\ell=0}^{j-1}\Delta_{i\ell,n}S
  +
  \beta
  \sum_{k=0}^{i-1}\Delta_{kj,n}S
  \notag\\
  &=
  S\left(\frac{i}{m_n},\frac{j}{m_n}\right)
  +
  \alpha\beta\,\Delta_{ij,n}S
  \notag\\
  &\quad+
  \alpha
  \left\{
    S\left(\frac{i+1}{m_n},\frac{j}{m_n}\right)
    -
    S\left(\frac{i}{m_n},\frac{j}{m_n}\right)
  \right\}
 +
  \beta
  \left\{
    S\left(\frac{i}{m_n},\frac{j+1}{m_n}\right)
    -
    S\left(\frac{i}{m_n},\frac{j}{m_n}\right)
  \right\}
  \notag\\
  &=
  (1-\alpha)(1-\beta)\,
  S\left(\frac{i}{m_n},\frac{j}{m_n}\right)
  +
  \alpha(1-\beta)\,
  S\left(\frac{i+1}{m_n},\frac{j}{m_n}\right)
  \notag\\
  &\quad+
  (1-\alpha)\beta\,
  S\left(\frac{i}{m_n},\frac{j+1}{m_n}\right)
  +
  \alpha\beta\,
  S\left(\frac{i+1}{m_n},\frac{j+1}{m_n}\right),
  \label{eq:checkerboard-bilinear-form}
\end{align}
where the second equality follows by telescoping and the groundedness of
$S$.

For a copula $C\in \Cop$, its restriction  $C|_{\mathcal V_n}$ is a subcopula.
The \emph{checkerboard approximation} of $C$, denoted by $\Xi_n(C)$, is then defined as the checkerboard copula associated with $C|_{\mathcal V_n}$.
We also write $\Cop^\#_n=\{\Xi_n(C):C\in\Cop\}\subset \Cop$.
For any $C\in \Cop$, its checkerboard approximation $\Xi_n(C)$ uniformly converges to $C$~\citep[][Theorem 4.1.5]{durantesempi2016}.

\begin{lemma}[Finite checkerboard representation]
\label{lem:finite-checkerboard}
Let $\Phi:\Cop\to\R$ be affine and $\preceq$-monotone. Then, for every
$n\geq1$, there uniquely exists $\bm a_n(\Phi)=(a_{ij,n}(\Phi))_{i,j\in\{1,\dots,m_n-1\}}$, $a_{ij,n}(\Phi)\geq0$, such that
\begin{align}
  \Phi(D)-\Phi(\Pi)
  &=
  \sum_{i,j=1}^{m_n-1}
  a_{ij,n}(\Phi)
  \left\{
    D\left(\frac{i}{m_{n}},\frac{j}{m_{n}}\right)-\frac{i}{m_{n}}\frac{j}{m_{n}}
  \right\}\qquad \text{for every $D\in\Cop^\#_n$.}
  \label{eq:finite-checkerboard-rep}
\end{align}
\end{lemma}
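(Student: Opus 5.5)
The plan is to identify $\Cop^\#_n$ with a finite-dimensional convex polytope and to extend $\Phi$ from it to an affine function on the ambient space. By the bilinear form~\eqref{eq:checkerboard-bilinear-form}, every $D\in\Cop^\#_n$ is determined by its grid values $x_{ij}=D(i/m_n,j/m_n)$, $i,j\in\{1,\dots,m_n-1\}$, since boundary values are fixed by groundedness and uniform margins. The map $D\mapsto\bm x(D)$ is affine and injective on $\Cop^\#_n$. Its image $K_n\subset\R^{(m_n-1)^2}$ is the set of $\bm x$ with all rectangle increments $\Delta_{ij,n}\geq0$, with boundary values filled in. Mixtures of checkerboard copulas are again checkerboard copulas, because \eqref{eq:checkerboard-bilinear-form} is linear in $S$, so $K_n$ is convex. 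Hence $\phi(\bm x)=\Phi(D)-\Phi(\Pi)$ is a well-defined affine function on $K_n$ with $\phi(\bm x(\Pi))=0$.

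The key step is that $K_n$ has nonempty interior and contains $\bm x(\Pi)$ as an interior point. This holds because $\Pi$ assigns mass $h_n^2>0$ to every cell, so any small perturbation of the grid values $\Pi(i h_n,j h_n)$ keeps all increments positive. An affine function on a convex set with nonempty interior extends uniquely to an affine function on the whole space. Therefore
\begin{align*}
\phi(\bm x)=\sum_{i,j=1}^{m_n-1}a_{ij,n}\bigl(x_{ij}-\tfrac{i}{m_n}\tfrac{j}{m_n}\bigr),
\end{align*}
which gives \eqref{eq:finite-checkerboard-rep}. The coefficients are unique because the differences $\bm x-\bm x(\Pi)$, $\bm x\in K_n$, span $\R^{(m_n-1)^2}$.

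For nonnegativity, fix $(i,j)$ and let $e_{ij}$ denote the corresponding unit vector. For small $\delta>0$, both $\bm x(\Pi)$ and $\bm x(\Pi)+\delta e_{ij}$ lie in $K_n$, since only the four increments adjacent to the node change, each by $\pm\delta$. The two corresponding checkerboard copulas $D_0=\Pi$ and $D_1$ satisfy $D_0\preceq D_1$. Indeed, the bilinear interpolation~\eqref{eq:checkerboard-bilinear-form} with nonnegative weights preserves the pointwise order of the grid values. By monotonicity of $\Phi$, we get $\delta\,a_{ij,n}=\Phi(D_1)-\Phi(D_0)\geq0$.

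I expect the main obstacle to be the careful verification that $\Cop^\#_n$ corresponds exactly to an open neighbourhood of $\bm x(\Pi)$ intersected with the constraint set, and that checkerboard interpolation is order-preserving. Both follow from~\eqref{eq:checkerboard-bilinear-form} and the fact that a subcopula on $\mathcal V_n$ is exactly a grounded, uniform-margin, 2-increasing grid function.
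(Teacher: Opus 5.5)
Your proposal is correct and is essentially the paper's argument in grid coordinates. The paper's test copulas $C^{\pm}_{ij,n}=\Pi\pm\varepsilon_n H_{ij,n}$ with $\varepsilon_n=h_n^2/2$ are exactly your points $\bm x(\Pi)\pm\varepsilon_n e_{ij}$, and nonnegativity and uniqueness of $a_{ij,n}(\Phi)$ are obtained in essentially the same way, via monotonicity at $\Pi\preceq C^{+}_{ij,n}$ and evaluation at these perturbations. The only real difference is that you invoke the standard fact that an affine function on a convex set with nonempty interior extends uniquely to an affine function on the whole space. The paper instead proves this directly in the case at hand: it writes $(1-\theta_n)\Pi+\theta_n D$ as an explicit convex combination of $\Pi$ and the $C^{\pm}_{ij,n}$, then applies the affinity of $\Phi$.
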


\begin{proof}
Fix $n\geq1$ and $i,j\in\{1,\dots,m_n-1\}$. Define $L_{i,n}:[0,1]\to [0,1]$ by
\begin{align*}
  L_{i,n}(u)
  =
  \max\left\{
    1-m_n\left|u-i\,h_n\right|,
    0
  \right\},
  \qquad
  u\in[0,1].
\end{align*}
Then $L_{i,n}$ is continuous, is affine on each grid interval
$[k\,h_n,(k+1)\,h_n]$, $k \in \{0,\dots,m_n-1\}$, and satisfies $L_{i,n}(k\,h_n)=\id_{\{k=i\}}$ for $k \in \{1,\dots,m_n-1\}$.
Define the two functions $C_{ij,n}^{+}=
  \Pi+\varepsilon_nH_{ij,n}$ and $C_{ij,n}^{-}
  =
  \Pi-\varepsilon_nH_{ij,n}$, where $H_{ij,n}(u,v)=L_{i,n}(u)L_{j,n}(v)$ for
$(u,v)\in[0,1]^2$ and $\varepsilon_n=h_n^2/2$. 

We will show that $C_{ij,n}^{+},C_{ij,n}^{-}\in\Cop_n^\#$.
First, their restrictions to $\mathcal V_n$ are subcopulas.
Indeed, $H_{ij,n}(u,v)=0$ whenever either $u$ or $v$ belongs to
$\{0,1\}$, which yields the boundary conditions.
For $k,\ell\in\{0,\dots,m_n-1\}$, we have
$\Delta_{k\ell,n}\Pi=h_n^2$ and \begin{align*}
\Delta_{k\ell,n}H_{ij,n}=\left\{L_{i,n}\left(\frac{k+1}{m_n}\right)-L_{i,n}\left(\frac{k}{m_n}\right)\right\}
\left\{L_{j,n}\left(\frac{\ell+1}{m_n}\right)-L_{j,n}\left(\frac{\ell}{m_n}\right)\right\}\in\{-1,0,1\}.
  \end{align*}
Therefore, $\Delta_{k\ell,n}C_{ij,n}^{\pm}
  =\Delta_{k\ell,n}\Pi\pm \varepsilon_n\Delta_{k\ell,n}H_{ij,n}
  \geq h_n^2-\varepsilon_n=h_n^2/2>0$, showing $2$-increasingness on $\mathcal V_n$.
Next, we prove bilinearity~\eqref{eq:checkerboard-bilinear-form} on each cell.
Fix $k,\ell\in\{0,\ldots,m_n-1\}$ and $(u,v)
  \in
  \left[k\,h_n,(k+1)\,h_n\right]
  \times
  \left[\ell\,h_n,(\ell+1)\,h_n\right]$,
and set $\alpha=m_nu-k$ and $\beta=m_n v - \ell$.
Then $u=(1-\alpha)k\,h_n
  +
  \alpha(k+1)\,h_n$ and $v
  =
  (1-\beta)\ell\,h_n
  +
  \beta(\ell+1)\,h_n$.
By affinity of $L_{i,n}$ and $L_{j,n}$, it follows that
\begin{align*}
  C_{ij,n}^{\pm}(u,v)
  &=
  (1-\alpha)(1-\beta)
  C_{ij,n}^{\pm}\left(
    \frac{k}{m_n},
    \frac{\ell}{m_n}
  \right)
  +
  \alpha(1-\beta)
  C_{ij,n}^{\pm}\left(
    \frac{k+1}{m_n},
    \frac{\ell}{m_n}
  \right)
  \\
  &\quad+
  (1-\alpha)\beta
  C_{ij,n}^{\pm}\left(
    \frac{k}{m_n},
    \frac{\ell+1}{m_n}
  \right)
 +
  \alpha\beta
  C_{ij,n}^{\pm}\left(
    \frac{k+1}{m_n},
    \frac{\ell+1}{m_n}
  \right).
\end{align*}
Consequently, $C_{ij,n}^{+}$ and $C_{ij,n}^{-}$ are copulas and, in particular, $C_{ij,n}^{\pm}=\Xi_n\left(C_{ij,n}^{\pm}\right)\in\Cop_n^\#$.

Define
\begin{align*}
  a_{ij,n}(\Phi)
  =
  \frac{
    \Phi(C_{ij,n}^{+})-\Phi(\Pi)
  }{\varepsilon_n}.
\end{align*}
Since $C_{ij,n}^{+}-\Pi=\varepsilon_nH_{ij,n}\geq0$, we have
$\Pi\preceq C_{ij,n}^{+}$. The $\preceq$-monotonicity of $\Phi$
therefore gives $a_{ij,n}(\Phi)\geq0$. Furthermore, we have
$\Pi=(C_{ij,n}^{+}+C_{ij,n}^{-})/2$, and thus the affinity of $\Phi$
yields $\Phi(C_{ij,n}^{-})-\Phi(\Pi)
  =
  -\varepsilon_na_{ij,n}(\Phi)$.
Now let $D\in\Cop_n^\#$ and define
\begin{align*}
  d_{ij,n}
  &=
  D\left(\frac{i}{m_n},\frac{j}{m_n}\right)
  -
  \frac{i}{m_n}\frac{j}{m_n},
  \qquad
i,j\in \{1,\dots,m_n-1\}.
\end{align*}
Then $D-\Pi$ and
$\sum_{i,j=1}^{m_n-1}d_{ij,n}H_{ij,n}$ agree on $\mathcal V_n$.
Moreover, both $D-\Pi$ and
$\sum_{i,j=1}^{m_n-1}d_{ij,n}H_{ij,n}$ are bilinear on every grid
cell. 
Consequently, we have $D-\Pi
  =
  \sum_{i,j=1}^{m_n-1}
  d_{ij,n}H_{ij,n}$ on $[0,1]^2$.

Set $S_n=\sum_{i,j=1}^{m_n-1}|d_{ij,n}|$. If $S_n=0$, then $D=\Pi$ and thus~\eqref{eq:finite-checkerboard-rep} is
immediate. 
Suppose that $S_n>0$, and let $\theta_n
  =
  \min\left\{
    1,
    \varepsilon_n/S_n
  \right\}$.
Write $d_{ij,n}=d_{ij,n}^{+}-d_{ij,n}^{-}$, where $d_{ij,n}^{+}
  =
  \max\{d_{ij,n},0\}$ and $d_{ij,n}^{-}
  =
  \max\{-d_{ij,n},0\}$,
and define $\lambda_{0,n}
  =
  1-\theta_nS_n/\varepsilon_n\ge 0$ and $\lambda_{ij,n}^{\pm}
  =
  \theta_nd_{ij,n}^{\pm}/\varepsilon_n\ge 0$ so that $\lambda_{0,n}
  +
  \sum_{i,j=1}^{m_n-1}
  \left(
    \lambda_{ij,n}^{+}
    +
    \lambda_{ij,n}^{-}
  \right)
  =
  1.$
 We then obtain
\begin{align*}
  \lambda_{0,n}\Pi
  +
  \sum_{i,j=1}^{m_n-1}
  \lambda_{ij,n}^{+}C_{ij,n}^{+}
  +
  \sum_{i,j=1}^{m_n-1}
  \lambda_{ij,n}^{-}C_{ij,n}^{-}
  &=
  \Pi
  +
  \varepsilon_n
  \sum_{i,j=1}^{m_n-1}
  \left(
    \lambda_{ij,n}^{+}
    -
    \lambda_{ij,n}^{-}
  \right)H_{ij,n}\\
  &=
  \Pi
  +
  \theta_n
  \sum_{i,j=1}^{m_n-1}
  d_{ij,n}H_{ij,n}\\
  &=
  (1-\theta_n)\Pi+\theta_nD.
\end{align*}
The affinity of $\Phi$ yields
\begin{align*}
  \theta_n
  \left\{
    \Phi(D)-\Phi(\Pi)
  \right\}
  &=
  \sum_{i,j=1}^{m_n-1}
  \lambda_{ij,n}^{+}
  \left\{
    \Phi(C_{ij,n}^{+})-\Phi(\Pi)
  \right\}+
  \sum_{i,j=1}^{m_n-1}
  \lambda_{ij,n}^{-}
  \left\{
    \Phi(C_{ij,n}^{-})-\Phi(\Pi)
  \right\}\\
  &=
  \theta_n
  \sum_{i,j=1}^{m_n-1}
  a_{ij,n}(\Phi)
  \left(
    d_{ij,n}^{+}-d_{ij,n}^{-}
  \right)\\
  &=
  \theta_n
  \sum_{i,j=1}^{m_n-1}
  a_{ij,n}(\Phi)d_{ij,n}.
\end{align*}
Since $\theta_n>0$, we obtain the desired representation~\eqref{eq:finite-checkerboard-rep}.

It remains to prove uniqueness of the coefficients $(a_{ij,n}(\Phi))_{i,j\in\{1,\dots,m_n-1\}}$. Suppose that
$(b_{ij,n})_{i,j\in\{1,\dots, m_n-1\}}$ yield the same representation. Fix
$i,j\in\{1,\dots,m_n-1\}$. 
By~\eqref{eq:finite-checkerboard-rep}, we have $\Phi(C_{ij,n}^{+})-\Phi(\Pi)
  =
  \varepsilon_n\,b_{ij,n}$ and $\Phi(C_{ij,n}^{+})-\Phi(\Pi)
  =
  \varepsilon_n\,a_{ij,n}(\Phi)$.
   Hence
$b_{ij,n}=a_{ij,n}(\Phi)$. 
\end{proof}
For each $n\geq1$, define the finite non-negative Borel measures
$\nu_n$ and $\eta_n$ on $[0,1]^2$ by
\begin{align*}
  \nu_n
  =
  \sum_{i,j=1}^{m_n-1}
  a_{ij,n}\left(\Phi\right)\,
  \delta_{\left(i\,h_n,j\,h_n\right)}\qquad\text{and}\qquad
  \eta_n
  =
  \sum_{i,j=1}^{m_n-1}
  a_{ij,n}\left(\Phi\right)
  w\left(i\,h_n,j\,h_n\right)
  \delta_{\left(i\,h_n,j\,h_n\right)}.
\end{align*}
Note that $\eta_n(A)
  =
  \int_A w\,\dd\nu_n$ for every
$A\in\mathcal B\left([0,1]^2\right)$.
Applying Lemma~\ref{lem:finite-checkerboard} to $\Xi_n(M)$ and $\Xi_n(W)$ and using continuity of $\Phi$, we have that, as $n\to \infty$,
\begin{align*}
  \eta_n\left(\left[0,1\right]^2\right)
  &=
  \int_{\left(0,1\right)^2}w\,\dd\nu_n
  =
  \Phi\left(\Xi_n\left(M\right)\right)
  -
  \Phi\left(\Xi_n\left(W\right)\right)\rightarrow
  \Phi\left(M\right)-\Phi\left(W\right).
\end{align*}
Since $[0,1]^2$ is compact, the family $\{\eta_n:n\geq1\}$ is uniformly
tight.
Together with the uniform boundedness in variation $\sup_{n\geq1}\|\eta_n\|_{\mathrm{TV}}
  \leq
  \sup_{n\geq1}\eta_n([0,1]^2)<\infty$, it follows from~\citet[Theorem~2.3.4]{bogachev2018} that there exists a subsequence
$(\eta_{n_k})_{k\geq1}$ weakly converging to a finite non-negative Borel
measure $\eta$ on $[0,1]^2$.
Note that $\eta\left([0,1]^2\right)
  =
  \lim_{k\to\infty}
  \eta_{n_k}\left([0,1]^2\right)
  =
  \Phi\left(M\right)-\Phi\left(W\right)<\infty$.

For $k\geq2$, set
$R_k=[h_k,1-h_k]^2$.
Following \citet[Section~3.5]{edwardstaylor2009}, the following map $\Box_k$ on $\Cop$ defines a copula: 
\begin{align*}
  \Box_k(C)(u,v)
  &=
  \begin{dcases}
    \left(1-2h_k\right)^2
    C\left(
    \frac{u-h_k}{1-2h_k},
    \frac{v-h_k}{1-2h_k}
  \right)
    +h_k(u+v)-h_k^2,
    &(u,v)\in R_k,\\[2pt]
    uv,
    &(u,v)\in[0,1]^2\setminus R_k.
  \end{dcases}
\end{align*}
The following properties follow from~\citet[Lemma~16]{edwardstaylor2009}.
First, $\Box_k(C)$ converges pointwise to $C$ for every $C\in \Cop$.
Second, $w_k 
  =
  \Box_k(M)-\Box_k(W)$ is continuous and vanishes on $[0,1]^2\setminus R_k$.
  Moreover, since $W
  \preceq
  \Box_k(W)
  \preceq
  \Pi
  \preceq
  \Box_k(M)
  \preceq
  M$, we have $0\leq w_k\leq w$.

\begin{lemma}[Boundary tightness]
\label{lem:checkerboard-boundary-tightness}
Let $\Phi:\Cop\to\R$ be continuous, affine, and
$\preceq$-monotone, and set
$S_\delta:=\{(u,v)\in[0,1]^2:w(u,v)<\delta\}$ for $\delta>0$. Then
\begin{align*}
  \lim_{\delta\downarrow0}
  \limsup_{n\to\infty}
  \eta_n\left(S_\delta\right)
  &=
  0.
  % \label{eq:boundary-tightness}
\end{align*}
\end{lemma}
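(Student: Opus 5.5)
The plan is to control $\eta_n(S_\delta)$ by comparing the full weighted mass $\int w\,\dd\nu_n$ with the weighted mass against the truncated width $w_k=\Box_k(M)-\Box_k(W)$. The latter vanishes near the boundary, so the difference between the two captures the boundary mass. Both quantities are values of $\Phi$ at checkerboard copulas, so continuity of $\Phi$ lets us pass to the limit.

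\emph{Step 1 (a grid identity for $w_k$).} Fix $k\geq2$. Apply Lemma~\ref{lem:finite-checkerboard} to $D=\Xi_n(\Box_k(M))$ and $D=\Xi_n(\Box_k(W))$, which lie in $\Cop^\#_n$. These agree with $\Box_k(M)$ and $\Box_k(W)$ on $\mathcal V_n$. Subtracting the two representations, the $\Pi$ terms cancel and we get
\begin{align*}
  \Phi\bigl(\Xi_n(\Box_k(M))\bigr)-\Phi\bigl(\Xi_n(\Box_k(W))\bigr)
  =\sum_{i,j=1}^{m_n-1}a_{ij,n}(\Phi)\,w_k(i\,h_n,j\,h_n).
\end{align*}
In the same way, $\Phi(\Xi_n(M))-\Phi(\Xi_n(W))=\sum_{i,j}a_{ij,n}(\Phi)\,w(i\,h_n,j\,h_n)$. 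Subtracting gives
\begin{align*}
  \sum_{i,j}a_{ij,n}(\Phi)\,(w-w_k)(i\,h_n,j\,h_n)
  =\Phi(\Xi_n M)-\Phi(\Xi_n W)-\Phi(\Xi_n\Box_k M)+\Phi(\Xi_n\Box_k W).
\end{align*}

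\emph{Step 2 (boundary points).} Take $\delta\leq h_k$. If $w(u,v)<\delta$, then one coordinate lies within distance $h_k$ of $\{0,1\}$, so $(u,v)\notin\operatorname{int}R_k$. Since $w_k$ is continuous and vanishes outside $R_k$, it follows that $w_k(u,v)=0$. Because $0\leq w_k\leq w$ and $a_{ij,n}(\Phi)\geq0$, every summand above is non-negative. Retaining only the grid points in $S_\delta$, where $(w-w_k)=w$, gives
\begin{align*}
  \eta_n(S_\delta)\leq \Phi(\Xi_n M)-\Phi(\Xi_n W)-\Phi(\Xi_n\Box_k M)+\Phi(\Xi_n\Box_k W).
\end{align*}

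\emph{Step 3 (limits).} Let $n\to\infty$. Since $\Xi_n(C)\to C$ uniformly for every copula and $\Phi$ is continuous, we obtain
\[
\limsup_n\eta_n(S_\delta)\leq \Phi(M)-\Phi(W)-\Phi(\Box_k M)+\Phi(\Box_k W)
\]
for all $\delta\leq h_k$. The map $\delta\mapsto\limsup_n\eta_n(S_\delta)$ is non-decreasing, so its limit as $\delta\downarrow0$ exists and is bounded by the right-hand side for every $k$.

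Finally let $k\to\infty$. By the quoted \citet[Lemma~16]{edwardstaylor2009}, $\Box_k(C)\to C$ pointwise. Pointwise convergence of copulas upgrades to uniform convergence because copulas are $1$-Lipschitz. Continuity of $\Phi$ then sends the bound to $0$, and together with $\eta_n\geq0$ this proves the claim.

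The main point needing care is Step 1. We must check that the checkerboard representation only uses grid values, so that applying it to $\Xi_n(\Box_k(\cdot))$ produces exactly $w_k$ evaluated at grid points. We must also check that the non-negativity of each summand holds, since it is what allows discarding the interior terms.
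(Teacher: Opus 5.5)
Your proposal is correct and follows the paper's proof essentially step for step. You use the same identity $\sum_{i,j}a_{ij,n}(\Phi)\,(w-w_k)(i\,h_n,j\,h_n)=\Phi(\Xi_n(M))-\Phi(\Xi_n(W))-\Phi(\Xi_n(\Box_k(M)))+\Phi(\Xi_n(\Box_k(W)))$, the same restriction to $\delta$ small enough that $w_k=0$ on $S_\delta$ (your $\delta\le h_k$ matches the paper's $\delta<\underline{\ell}_k=h_k$), and the same order of limits $n\to\infty$, $\delta\downarrow0$, $k\to\infty$. You also state explicitly that the pointwise convergence $\Box_k(C)\to C$ upgrades to uniform convergence, which the paper leaves implicit when it invokes continuity of $\Phi$.
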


\begin{proof}
For each $k\geq2$, set $\underline{\ell}_k
  =
  \min_{(u,v)\in R_k}w(u,v)>0$. 
Fix $\delta\in(0,\underline{\ell}_k)$ so that $S_\delta\cap R_k=\emptyset$, and hence $w_k=0$ on
$S_\delta$. Therefore, using the non-negativity of the coefficients $(a_{ij,n}(\Phi))$ in Lemma~\ref{lem:finite-checkerboard} and the inequality
$w-w_k\geq0$, we obtain
\begin{align*}
  \eta_n(S_\delta)
  &=
  \sum_{\substack{1\leq i,j\leq m_n-1\\
    (i\,h_n,j\,h_n)\in S_\delta}}
  a_{ij,n}(\Phi)
  \left(w-w_k\right)
  \left(i\,h_n,j\,h_n\right)\\
& \leq
  \sum_{i,j=1}^{m_n-1}
  a_{ij,n}(\Phi)
  \left(w-w_k\right)
  \left(i\,h_n,j\,h_n\right)\\
  &= \Phi\left(\Xi_n(M)\right)
  -
  \Phi\left(\Xi_n(W)\right)
  -
  \Phi\left(\Xi_n\left(\Box_k(M)\right)\right)
  +
  \Phi\left(\Xi_n\left(\Box_k(W)\right)\right).
\end{align*}
For this fixed $k$, letting $n\to\infty$ yields, for every $0<\delta<\underline{\ell}_k$,
\begin{align}\label{eq:eta:bound}
  \limsup_{n\to\infty}\eta_n(S_\delta)
  &\leq
  \Phi(M)
  -
  \Phi(W)
  -
  \Phi\left(\Box_k(M)\right)
  +
  \Phi\left(\Box_k(W)\right)
=:
  \overline{\eta}_k(\Phi).
\end{align}
Since $S_\delta$ is a non-decreasing set in $\delta$, the function
$\delta\mapsto\limsup_{n\to\infty}\eta_n(S_\delta)$ is non-decreasing and bounded from above. 
Hence its limit as $\delta\downarrow0$ exists, and~\eqref{eq:eta:bound} gives $ 0
  \leq
  \lim_{\delta\downarrow0}
  \limsup_{n\to\infty}\eta_n(S_\delta)
  \leq
  \overline{\eta}_k(\Phi)$ for every $k\geq2$.
  Since $\overline{\eta}_k(\Phi)\rightarrow 0$ by continuity of $\Phi$, we obtain the desired result.
\end{proof}

We are now ready to prove Theorem~\ref{thm:affine-section-representation}.

\begin{proof}[Proof of Theorem~\ref{thm:affine-section-representation} (Necessity)]
Assume that $\Phi$ is continuous, affine, and
$\preceq$-monotone.
Choose a subsequence
$n_k\uparrow\infty$ such that $\eta_{n_k}$ converges weakly to a
finite non-negative Borel measure $\eta$ on $[0,1]^2$.
We first show that $\eta$ is concentrated on $(0,1)^2$, that is, $\eta(\partial [0,1]^2)=0$.
Since $w$ is continuous, each $S_\delta$ is open in $[0,1]^2$.
Therefore, the portmanteau theorem~\citep[][Theorem~2.1]{billingsley1999convergence} yields $\eta\left(S_\delta\right)
  \leq
  \liminf_{k\to\infty}
  \eta_{n_k}\left(S_\delta\right)$.
  Since $S_\delta\downarrow \{w=0\}=\partial [0,1]^2$ as $\delta\downarrow0$, continuity of $\eta$ from above implies $\eta\left(\partial [0,1]^2\right)
  =
  \lim_{\delta\downarrow0}
  \eta\left(S_\delta\right)$.
  Combining them with Lemma~\ref{lem:checkerboard-boundary-tightness}, we have that
  \begin{align*}
      \eta\left(\partial [0,1]^2\right)
  =
  \lim_{\delta\downarrow0}
  \eta\left(S_\delta\right)\le   
  \lim_{\delta\downarrow0}
  \liminf_{k\to\infty}
  \eta_{n_k}\left(S_\delta\right)
  \le 
  \lim_{\delta\downarrow0}
  \limsup_{n\to\infty}
  \eta_n\left(S_\delta\right)
  =
  0.
  \end{align*}
  Since $\eta$ is a non-negative measure, we have $ \eta\left(\partial [0,1]^2\right)=0$.

For every Borel set $A\subset (0,1)^2$, define $\nu(A)
  =
  \int_A w^{-1}\,\dd\eta$.
Since $w^{-1}$ is positive and Borel measurable on $(0,1)^2$, this defines a non-negative Borel measure on $(0,1)^2$. Furthermore,
\begin{align*}
  \int_{(0,1)^2}w\,\dd\nu
  &=
  \eta\left((0,1)^2\right)
  =
  \eta\left([0,1]^2\right)
  =
  \Phi(M)-\Phi(W)
  <
  \infty.
\end{align*}
Thus $\nu\in\Meas_w^+$, and
\eqref{eq:section-weighted-mass} holds with $\nu_\Phi=\nu$.
  
We next prove \eqref{eq:affine-section-representation} for
checkerboard copulas. Let $D\in\Cop_{n_D}^{\#}$ for some $n_D\geq1$,
and define $r_D=(D-\Pi)/w$ on $(0,1)^2$. The function $r_D$ is continuous on $(0,1)^2$.
Moreover, $\lvert r_D\rvert\leq1$ because $\lvert D-\Pi\rvert
  \leq
  \max\left\{M-\Pi,\Pi-W\right\}
  \leq
  w$.

Since $n_k\to\infty$, there exists $k_D\geq1$ such that
$n_k\geq n_D$ for every $k\geq k_D$.
Note that $D\in\Cop_{n_k}^\#$ for every $k\geq k_D$.
Hence, Lemma~\ref{lem:finite-checkerboard} yields
\begin{align}\label{eq:Phi:diff:r:D}
  \Phi(D)-\Phi(\Pi)
  &=
  \int_{(0,1)^2}(D-\Pi)\,\dd\nu_{n_k}
  =
  \int_{(0,1)^2}r_D\,\dd\eta_{n_k}
  \qquad\text{for every $k\geq k_D$.}
\end{align}

For $\delta>0$, define $\chi_\delta:[0,1]^2\to[0,1]$ by
\begin{align*}
  \chi_\delta(u,v)
  =
  \begin{cases}
    0,
    & \text{if }w(u,v)\leq\delta/2,\\
    2w(u,v)/\delta-1,
    & \text{if }\delta/2<w(u,v)<\delta,\\
    1,
    & \text{if }\delta\leq w(u,v).
  \end{cases}
\end{align*}
Then $\chi_\delta=0$ on $S_{\delta/2}$ and
$\chi_\delta=1$ on $[0,1]^2\setminus S_\delta$. Since
$S_{\delta/2}$ is an open neighborhood of $\partial[0,1]^2$ in
$[0,1]^2$ and $\chi_\delta r_D=0$ on
$S_{\delta/2}\cap(0,1)^2$, setting $\chi_\delta r_D$ equal to zero on
$\partial[0,1]^2$ defines a continuous extension to $[0,1]^2$. We
denote this extension by $(\chi_\delta r_D)^\ast$.
By the weak convergence
of $\eta_{n_k}$ to $\eta$, we have $\int_{[0,1]^2}
  (\chi_\delta r_D)^\ast\,\dd\eta_{n_k}
\rightarrow
  \int_{[0,1]^2}
  (\chi_\delta r_D)^\ast\,\dd\eta$.
Since $\lvert r_D\rvert\leq1$, $0\leq\chi_\delta\leq1$, and
$1-\chi_\delta$ vanishes outside $S_\delta$, it follows
from~\eqref{eq:Phi:diff:r:D} that
\begin{align*}
  \left|
    \Phi\left(D\right)-\Phi\left(\Pi\right)
    -
    \int_{[0,1]^2}(\chi_\delta r_D)^\ast\,\dd\eta
  \right|
  &=
  \lim_{k\to\infty}
  \left|
    \int_{(0,1)^2}r_D\,\dd\eta_{n_k}
    -
    \int_{[0,1]^2}(\chi_\delta r_D)^\ast\,\dd\eta_{n_k}
  \right|
  \\
  &=
  \lim_{k\to\infty}
  \left|
    \int_{(0,1)^2}
    \left(1-\chi_\delta\right)r_D\,\dd\eta_{n_k}
  \right|
  \\
  &\leq
  \limsup_{k\to\infty}
  \eta_{n_k}\left(S_\delta\right)
  \\
  &\leq
  \limsup_{n\to\infty}
  \eta_n\left(S_\delta\right).
\end{align*}
The right-hand side tends to zero as $\delta\downarrow0$ by
Lemma~\ref{lem:checkerboard-boundary-tightness}.
For the left-hand side, as $\delta\downarrow0$,
$(\chi_\delta r_D)^\ast$ converges pointwise on $[0,1]^2$ to $r_D^\ast$, the
function that equals $r_D$ on $(0,1)^2$ and zero on
$\partial[0,1]^2$.
Moreover, we have $\lvert(\chi_\delta r_D)^\ast\rvert\leq1$. Since $\eta$ is finite and
concentrated on $(0,1)^2$, the dominated convergence theorem gives
$\int_{[0,1]^2}
  (\chi_\delta r_D)^\ast\,\dd\eta
\rightarrow
  \int_{[0,1]^2}r_D^\ast\,\dd\eta=  \int_{(0,1)^2}r_D\,\dd\eta$.
Hence
\begin{align}\label{eq:desired:checkerboard}
  \Phi\left(D\right)-\Phi\left(\Pi\right)
  &=
  \int_{(0,1)^2}r_D\,\dd\eta
  =
  \int_{(0,1)^2}
  \left(D-\Pi\right)\,\dd\nu,
  \qquad
  D\in\bigcup_{n\geq1}\Cop_n^\#.
\end{align}

Finally, let $C\in\Cop$ be arbitrary. Since
$\Xi_n(C)\in\Cop_n^\#$, it follows from~\eqref{eq:desired:checkerboard} that
$\Phi(\Xi_n(C))-\Phi(\Pi)
=
\int_{(0,1)^2}(\Xi_n(C)-\Pi)\,\dd\nu$
for every $n\geq1$.
On $(0,1)^2$, set
$r_n=(\Xi_n(C)-\Pi)/w$ and $r=(C-\Pi)/w$. Then
$r_n\to r$ pointwise, while $\lvert r_n\rvert\leq1$ and
$\lvert r\rvert\leq1$.
Therefore, the dominated convergence theorem for the
finite measure $\eta$ yields
\begin{align}\label{eq:Xi:desired}
  \int_{(0,1)^2}
  \left(\Xi_n\left(C\right)-\Pi\right)\,\dd\nu
  =
  \int_{(0,1)^2}r_n\,\dd\eta
  \rightarrow
  \int_{(0,1)^2}r\,\dd\eta
  =
  \int_{(0,1)^2}
  \left(C-\Pi\right)\,\dd\nu.
\end{align}
Moreover, $\Xi_n(C)\to C$ uniformly, and hence the continuity of
$\Phi$ gives $\Phi(\Xi_n(C))\to\Phi(C)$. 
Combining this with~\eqref{eq:Xi:desired} leads to
\eqref{eq:affine-section-representation}, which concludes the proof.
\end{proof}

\subsection{Necessity part of Theorem~\ref{thm:measure:field:characterization}}

\begin{proof}[Proof of Theorem~\ref{thm:measure:field:characterization}~\ref{item:characterization:if}]

Let $\kappa$ be a measure of concordance of degree at most
two, and let $b_\kappa$ be its canonical polarization. 
For each fixed $C\in\Cop$, define the map
$\Phi_C:\Cop\to\R$ by
$\Phi_C(D)=b_\kappa(C,D)$. By
Propositions~\ref{prop:canonical-polarization}
and~\ref{prop:characterization:b}, the map $\Phi_C$ is
continuous, affine, and $\preceq$-monotone. Therefore, by~Theorem~\ref{thm:affine-section-representation}, there exists a unique
measure $\nu_C^\kappa\in\Meas_w^+$ such that
\begin{align}\label{eq:Phi:representation}
  \Phi_C(D)
  =
  \Phi_C(\Pi)
  +\int_{(0,1)^2}(D-\Pi)\,\dd\nu_C^\kappa,
  \qquad D\in\Cop.
\end{align}
Let $C=\Pi$ in~\eqref{eq:Phi:representation}. 
Since $\Phi_{\Pi}(D)=\Phi_D(\Pi)$ and $\Phi_{\Pi}(\Pi)=b_{\kappa}(\Pi,\Pi)=\kappa(\Pi)=0$, we find $\Phi_D(\Pi)=
    \int_{(0,1)^2}(D-\Pi)\,\dd\nu_\Pi^\kappa$.
 Substituting this into~\eqref{eq:Phi:representation} yields the representation~\eqref{eq:anchored:field:representation}.
 Since $\nu_C^\kappa$ is unique for each $C\in \Cop$, the field $\nu^\kappa$ is also unique.
 Note that Condition~\ref{cond:positive:w:field} is automatically satisfied.
 We now check the other conditions for the field $\nu^\kappa$ to be admissible. 

 We first show the affinity~\ref{cond:affine}.
Fix $C,D\in\Cop$ and $t\in[0,1]$, and put
  $C_t:=(1-t)C+tD$. Separate affinity of $b_\kappa$ gives $\Phi_{C_t}(E)
    =
    (1-t)\Phi_C(E)+t\Phi_D(E)$, $E\in \Cop$.
  It then follows that
  \begin{align*}
    \Phi_{C_t}(E)-\Phi_{C_t}(\Pi)
    &=
    (1-t)\bigl\{\Phi_C(E)-\Phi_C(\Pi)\bigr\} +t\bigl\{\Phi_D(E)-\Phi_D(\Pi)\bigr\}\\
    &=
    \int_{(0,1)^2}(E-\Pi)\,
      \dd\bigl((1-t)\nu_C^\kappa+t\nu_D^\kappa\bigr),\qquad E\in \Cop.
  \end{align*}
  Since the measure
  $(1-t)\nu_C^\kappa+t\nu_D^\kappa$ belongs to
  $\Meas_w^+$, uniqueness of the representing measure for
  $\Phi_{C_t}$ yields $\nu_{(1-t)C+tD}^\kappa
    =
    (1-t)\nu_C^\kappa+t\nu_D^\kappa$.

    We next show~\ref{cond:reciprocity}.
    By~\eqref{eq:anchored:field:representation} and symmetry of $b_\kappa$, we have that
  \begin{align*}
    \int_{(0,1)^2}(C-\Pi)\,\dd\nu_\Pi^\kappa
    +\int_{(0,1)^2}(D-\Pi)\,\dd\nu_C^\kappa
    =
    \int_{(0,1)^2}(D-\Pi)\,\dd\nu_\Pi^\kappa
    +\int_{(0,1)^2}(C-\Pi)\,\dd\nu_D^\kappa.
  \end{align*}
  Rearranging this identity yields~\eqref{eq:reciprocity:identity}.

To verify~\ref{cond:D4:equivariance}, we first record the identity
\begin{align}\label{eq:copula-action-function}
  C^\gamma-\Pi
  &=
  \eps(\gamma)(C-\Pi)\circ\gamma^{-1},
  \qquad C\in\Cop.
\end{align}
By the action law and the multiplicativity of $\eps$, it suffices to
verify this identity for the generators $\pi$ and $\sigma_1$ of
$\mathcal D_4$. Since $\eps(\pi)=1$ and $\eps(\sigma_1)=-1$,
\begin{align*}
  (C^\pi-\Pi)(u,v)
  &=
  C(v,u)-vu
  =
  ((C-\Pi)\circ\pi)(u,v), \\
  (C^{\sigma_1}-\Pi)(u,v)
  &=
  v-C(1-u,v)-uv \\
  &=
  -\{C(1-u,v)-(1-u)v\}\\
  &=
  -\{(C-\Pi)\circ\sigma_1\}(u,v).
\end{align*}
Thus \eqref{eq:copula-action-function} holds for every
$\gamma\in\mathcal D_4$.

Now fix $C\in\Cop$ and $\gamma\in\mathcal D_4$. We show that
$\gamma_\#\nu_C^\kappa$ is the representing measure associated with
$\Phi_{C^\gamma}$, which directly implies~\ref{cond:D4:equivariance} by the uniqueness of the representing measure. 

First, we have that $\gamma_\#\nu_C^\kappa\in \Meas_w^+$. 
Indeed, since $w(u,v)=\min\{u,v,1-u,1-v\}$ is $\mathcal D_4$-invariant in the sense that $w\circ \gamma = w$, we have
\begin{align*}
  \int_{(0,1)^2}w\,\dd(\gamma_\#\nu_C^\kappa)
  =
  \int_{(0,1)^2}w\circ\gamma\,\dd\nu_C^\kappa =
  \int_{(0,1)^2}w\,\dd\nu_C^\kappa
  <\infty.
\end{align*}

We next consider $\Phi_{C^\gamma}$.
Let
$D\in\Cop$. Since $(D^{\gamma^{-1}})^\gamma=D$, we have from Proposition~\ref{prop:characterization:b}~\ref{cond:B1} that
\begin{align*}
  b_\kappa(C^\gamma,D)
  =
  b_\kappa\bigl(C^\gamma,
    (D^{\gamma^{-1}})^\gamma\bigr) =
  \eps(\gamma)b_\kappa(C,D^{\gamma^{-1}}).
\end{align*}
Similarly, since $\Pi^\gamma=\Pi$, we have
 $b_\kappa(C^\gamma,\Pi)
  =
  \eps(\gamma)b_\kappa(C,\Pi)$.
  Together with~\eqref{eq:Phi:representation}, we obtain
\begin{align*}
  b_\kappa(C^\gamma,D)-b_\kappa(C^\gamma,\Pi)
  =
  \eps(\gamma)
  \bigl\{
    b_\kappa(C,D^{\gamma^{-1}})
    -
    b_\kappa(C,\Pi)
  \bigr\} =
  \eps(\gamma)
  \int_{(0,1)^2}
    (D^{\gamma^{-1}}-\Pi)\,\dd\nu_C^\kappa.
\end{align*}
Applying \eqref{eq:copula-action-function} to $D$ and
$\gamma^{-1}$ gives $D^{\gamma^{-1}}-\Pi
  =
  \eps(\gamma^{-1})(D-\Pi)\circ\gamma$.
Because $\eps$ takes values in $\{-1,1\}$ and is multiplicative, we have $\eps(\gamma^{-1})
  =
  \eps(\gamma)$.
Therefore,
\begin{align*}
  \eps(\gamma)(D^{\gamma^{-1}}-\Pi)
  =
  \eps(\gamma)^2(D-\Pi)\circ\gamma=
  (D-\Pi)\circ\gamma.
\end{align*}
It thus follows that
\begin{align*}
  b_\kappa(C^\gamma,D)-b_\kappa(C^\gamma,\Pi)
  =
  \int_{(0,1)^2}(D-\Pi)\circ\gamma\,
    \dd\nu_C^\kappa =
  \int_{(0,1)^2}(D-\Pi)\,
    \dd(\gamma_\#\nu_C^\kappa).
\end{align*}
Combining this with~\eqref{eq:Phi:representation}, we have
\begin{align*}
  \int_{(0,1)^2}(D-\Pi)\,
    \dd\nu_{C^\gamma}^\kappa
  &=
  \int_{(0,1)^2}(D-\Pi)\,
    \dd(\gamma_\#\nu_C^\kappa),\qquad D \in \Cop.
\end{align*}
By the uniqueness assertion of the representing measure, we conclude $\nu_{C^\gamma}^\kappa
  =
  \gamma_\#\nu_C^\kappa$.

Continuity of the diagonal map~\ref{cond:diagonal:continuity} follows directly from 
\begin{align*}
  \zeta_{\nu^\kappa}(C)
  =
  \int_{(0,1)^2}(C-\Pi)\,\dd\nu_\Pi^\kappa
  +
  \int_{(0,1)^2}(C-\Pi)\,\dd\nu_C^\kappa  =
  b_\kappa(C,C)
  =
  \kappa(C).
\end{align*}

Finally,~\ref{cond:normalization} can be checked directly from~\eqref{eq:anchored:field:representation} and $b_\kappa(M,M)=\kappa(M)=1$.
\end{proof}

\subsection{Sufficiency part of
Theorem~\ref{thm:measure:field:characterization}}
\label{sec:sufficiency:uniqueness}

\begin{proof}[Proof of
Theorem~\ref{thm:measure:field:characterization}%
~\ref{item:characterization:only:if}]
Let $\nu=(\nu_C)_{C\in\Cop}$ be an admissible field, and define
$\beta_\nu:\Cop\times\Cop\to\R$ by
\begin{align*}
  \beta_\nu(C,D)
  &=
  \int_{(0,1)^2}(C-\Pi)\,\dd\nu_\Pi
  +
  \int_{(0,1)^2}(D-\Pi)\,\dd\nu_C.
\end{align*}
Since $|E-\Pi|\leq w$ for every $E\in\Cop$ and
$\nu_C\in\Meas_w^+$ for every $C\in\Cop$, both integrals are finite.
Note that $\kappa_\nu(C)=
  \beta_\nu(C,C)$, $C\in\Cop$.

We first establish that $\beta_\nu$ is separately affine. Fix
$C,D,E\in\Cop$ and $t\in[0,1]$. By the affinity of the field,
\begin{align*}
  \beta_\nu((1-t)C+tE,D)
  &=
  \int_{(0,1)^2}
    \left\{(1-t)C+tE-\Pi\right\}\,\dd\nu_\Pi 
  +
  \int_{(0,1)^2}(D-\Pi)\,
    \dd\left\{(1-t)\nu_C+t\nu_E\right\} \\
  &=
  (1-t)\beta_\nu(C,D)+t\beta_\nu(E,D).
\end{align*}
Thus $\beta_\nu$ is affine in its first argument. Its affinity in the
second argument follows directly from its definition.

Next, we show that $\beta_\nu$ is symmetric.
The reciprocity condition~\ref{cond:reciprocity} implies that 
\begin{align*}
  \beta_\nu(C,D)-\beta_\nu(D,C)
  &=
  \int_{(0,1)^2}(D-\Pi)\,
    \dd(\nu_C-\nu_\Pi) 
  -
  \int_{(0,1)^2}(C-\Pi)\,
    \dd(\nu_D-\nu_\Pi)=0,
\end{align*}
and thus $\beta_\nu$ is symmetric.

Symmetry and separate-affinity imply that $\kappa_\nu$ has degree at most two.
Indeed, for $C_t=(1-t)C+tD$, $t\in[0,1]$, we have $\kappa_\nu(C_t)
  =
  (1-t)^2\kappa_\nu(C)
  +2t(1-t)\beta_\nu(C,D)
  +t^2\kappa_\nu(D)$, which is a polynomial in $t$ of degree at most two.

We next verify the axiomatic properties in Definition~\ref{def:moc}. 

To show~\ref{item:pi:sym} and~\ref{item:sigma:sym}, we prove $\kappa_\nu(C^\gamma)
  =
  \eps(\gamma)\kappa_\nu(C)$ for $C\in\Cop$, $\gamma\in\mathcal D_4$.
Since
$\Pi^\gamma=\Pi$, Condition~\ref{cond:D4:equivariance} gives
$\gamma_\#\nu_\Pi
  =
  \nu_{\Pi^\gamma}
  =
  \nu_\Pi$, $\gamma\in\mathcal D_4$.
  It thus follows from~\eqref{eq:copula-action-function} and $\nu_{C^\gamma}=\gamma_\#\nu_C$ that
\begin{align*}
  \beta_\nu(C^\gamma,D^\gamma)
  &=
  \int_{(0,1)^2}(C^\gamma-\Pi)\,
    \dd(\gamma_\#\nu_\Pi)
  +
  \int_{(0,1)^2}(D^\gamma-\Pi)\,
    \dd(\gamma_\#\nu_C) \\
  &=
  \int_{(0,1)^2}
    \{(C^\gamma-\Pi)\circ\gamma\}\,\dd\nu_\Pi
  +
  \int_{(0,1)^2}
    \{(D^\gamma-\Pi)\circ\gamma\}\,\dd\nu_C \\
  &=
  \eps(\gamma)\beta_\nu(C,D).
\end{align*}
Taking $D=C$ yields the desired equality $\kappa_\nu(C^\gamma)
  =
  \eps(\gamma)\kappa_\nu(C)$, $C\in\Cop,\ \gamma\in\mathcal D_4$, implying~\ref{item:pi:sym} and~\ref{item:sigma:sym}.

We next verify the monotonicity axiom~\ref{item:monotone}.
If $D_1\preceq D_2$, then, for every
$C\in\Cop$,
\begin{align*}
  \beta_\nu(C,D_2)-\beta_\nu(C,D_1)
  &=
  \int_{(0,1)^2}(D_2-D_1)\,\dd\nu_C
  \geq 0.
\end{align*}
Therefore, $\beta_\nu$ is non-decreasing in
its second argument. 
By symmetry, $\beta_\nu$ is also non-decreasing in its first argument.
Consequently, if $C\preceq D$, then
\begin{align*}
  \kappa_\nu(C)
  =
  \beta_\nu(C,C)
  \leq
  \beta_\nu(C,D)
  \leq
  \beta_\nu(D,D)
  =
  \kappa_\nu(D).
\end{align*}

Condition~\ref{item:M} is a direct consequence of Condition~\ref{cond:normalization}.
Continuity~\ref{item:conv} also follows immediately from~\ref{cond:diagonal:continuity} and $\zeta_\nu=\kappa_\nu$.
Therefore, we have now shown that $\kappa_\nu$ is a measure of concordance of degree at most two.

The map $\beta_\nu$ is symmetric and separately
affine, and its diagonal is $\kappa_\nu$. Therefore, 
Proposition~\ref{prop:canonical-polarization}~(iv) gives $\beta_\nu
  =
  b_{\kappa_\nu}$.
 By comparing the definition of $\beta_\nu$ and~\eqref{eq:anchored:field:representation}, the uniqueness of the admissible field established in Theorem~\ref{thm:measure:field:characterization}~\ref{item:characterization:if} leads to $ \nu
  =
  \nu^{\kappa_\nu}$.
  This completes the proof.
\end{proof}

\section{Proofs for Section~\ref{sec:constructions}}

\subsection{Proposition~\ref{prop:operator-construction}}

\begin{proof}[Proof of Proposition~\ref{prop:operator-construction}]

We first check that $\nu^{\Theta}$ is an admissible field.
Condition~\ref{cond:positive:w:field} follows from non-negativity and finiteness of $\nu_C^\Theta$ and boundedness of $w$.
Condition~\ref{cond:affine} follows from the affinity of $\Theta$.

To show~\ref{cond:reciprocity}, define
\begin{align*}
  \beta_\Theta(C,D)
  =
  \int_{(0,1)^2}(C-\Pi)\,\dd\nu_\Pi^\Theta
  +\int_{(0,1)^2}(D-\Pi)\,\dd\nu_{C}^\Theta.
  \end{align*}
  Then
\begin{align*}
  \beta_\Theta(C,D)
  &=
  4\int_{(0,1)^2}(C-\Pi)\,\dd\mu_\Pi
  +4\int_{(0,1)^2}(D-\Pi)\,\dd\mu_{\Theta(C)}\\
  &=
  Q(\Pi,C)-Q(\Pi,\Pi)
  +Q(\Theta(C),D)-Q(\Theta(C),\Pi).
\end{align*}
Note that $Q(\Pi,\Pi)=0$ and $  Q(\Theta(C),\Pi)
  =
  Q(C,\Theta(\Pi))
  =
  Q(C,\Pi)
  =
  Q(\Pi,C)
$ by symmetry of $Q$ and
Conditions~\ref{cond:T-fixed} and~\ref{cond:T-self-adjoint}.
Therefore, we have $
  \beta_\Theta(C,D)
  =
  Q(\Theta(C),D).
$
Condition~\ref{cond:T-self-adjoint} and symmetry of $Q$ now give
\begin{align*}
  \beta_\Theta(C,D)
  &=
  Q(\Theta(C),D)
  =
  Q(C,\Theta(D))
  =
  Q(\Theta(D),C)
  =
  \beta_\Theta(D,C).
\end{align*}
Thus $\beta_\Theta$ is symmetric.
The equality $\beta_\Theta(D,C)=\beta_\Theta(C,D)$ directly leads to the reciprocity condition for $\nu^\Theta$.

Condition~\ref{cond:T-equivariant} implies Condition~\ref{cond:D4:equivariance} since  
$  \nu_{C^\gamma}^\Theta
  =
  4\mu_{\Theta(C^\gamma)}
  =
  4\mu_{\Theta(C)^\gamma}
  =
  \gamma_\#\nu_C^\Theta.
$
To show Condition~\ref{cond:diagonal:continuity}, note that the diagonal map can be written as $\zeta_{\nu^\Theta}(C)
  =
  \beta_\Theta(C,C)
  =
  Q(\Theta(C),C).
$
Since $\Theta$ is continuous by assumption and $Q$ is continuous by Theorem 4.3 of~\citet{fuchs2016biconvex}, we have that $\zeta_{\nu^\Theta}$ is continuous.
Finally,~\ref{cond:normalization} follows from $\beta_\Theta(M,M)=Q(\Theta(M),M)
  =
  Q(M,M)
  =
  1.
$
Consequently, $\nu^\Theta$ is an
admissible field.

Theorem~\ref{thm:measure:field:characterization}~\ref{item:characterization:only:if} now shows that
$\kappa_\Theta$ is a measure of concordance of degree at most two, with $\nu^\Theta$ being its admissible field and $
  b_{\kappa_\Theta}(C,D)
  =
  \beta_\Theta(C,D)
  =
  Q(\Theta(C),D).
$
Therefore, it remains to show that $\kappa_\Theta$ has exact degree two.
To check this,  consider $C_t
  =
  (1-t)\Pi+tM$, $t \in [0,1]$.
  Affinity and Condition~\ref{cond:T-fixed} give $\Theta(C_t)=C_t$.
Together with $Q(\Pi,M)=1/3$, we have that
\begin{align*}
  \kappa_\Theta(C_t)
  =Q(\Theta(C_t),C_t)=
  Q(C_t,C_t)=
  2t(1-t)Q(\Pi,M)+t^2Q(M,M)=
  \frac{2t+t^2}{3},
\end{align*}
which is a polynomial of degree exactly two.
\end{proof}

\subsection{Proposition~\ref{prop:group:averaging}}
\begin{proof}[Proof of Proposition~\ref{prop:group:averaging}]

We first list some properties of $\mathcal D_4$ necessary in this proof.
Let
\begin{align*}
  \mathcal H_M
  &=
  \{\gamma\in\mathcal D_4:M^\gamma=M\}
  =
  \{\mathrm{id},\pi,\varrho,\pi\circ \varrho\}=
  \{\pi^a \circ \varrho^b:a,b\in\{0,1\}\},
\end{align*}
with $\pi^0=\varrho^0=\mathrm{id}$.
Since $\pi\circ\varrho=\varrho\circ\pi$ and
$\pi^2=\varrho^2=\mathrm{id}$, the subgroup
$\mathcal H_M$ is abelian.
Note that $\mathcal D_4$ itself is not since $\sigma_1\circ \pi \neq \pi \circ \sigma_1$.

The subgroup $\mathcal H_M$ is normal in $\mathcal D_4$, that is,  $\delta^{-1}\mathcal H_M\delta
  =
  \mathcal H_M$, $\delta\in\mathcal D_4$.
This is indeed true for $\delta\in\mathcal H_M$ due to its commutativity.
Note that $\mathcal D_4=\mathcal H_M \cup \sigma_1 \mathcal H_M$. 
Therefore, any $\delta\in\mathcal D_4\setminus\mathcal H_M$ is of the form $\delta=\sigma_1\circ h$ for $h \in \mathcal H_M$.
In addition, it can be directly checked that $\sigma_1^{-1}\mathcal H_M \sigma_1 =\mathcal H_M$.
Therefore, 
\begin{align*}
  \delta^{-1}\mathcal H_M\delta
  =h^{-1}\circ \sigma_1^{-1}
  \mathcal H_M\sigma_1\circ h=h^{-1}\mathcal H_M h=\mathcal H_M.
\end{align*}
Consequently, $\mathcal H_M$ is a normal subgroup of $\mathcal D_4$.

Suppose first that $w_\pi
  =
  w_{\pi\circ \varrho}$ and $w_{\sigma_1}
  =
  w_{\sigma_2}
  =
  w_{\sigma_1\circ\pi}
  =
  w_{\pi\circ\sigma_1}
  =
  0$.
Then $\Theta_{\bm w}(C)
  =
  \sum_{\gamma\in\mathcal H_M}
  w_\gamma C^\gamma$.
We first verify Condition~\ref{cond:T-equivariant}. 
Fix $C\in \Cop$ and $\delta\in \mathcal D_4$. 
By normality of $\mathcal H_M$ in $\mathcal D_4$ and affinity of the map $C\mapsto C^\delta$, we have that
\begin{align*}
  \{\Theta_{\bm w}(C^\delta)\}^{\delta^{-1}}
  &=
  \sum_{\gamma\in\mathcal H_M}
  w_\gamma C^{\delta^{-1}\circ\gamma\circ\delta}
  =
  \sum_{\eta\in\mathcal H_M}
  w_{\delta\circ \eta\circ \delta^{-1}}C^\eta
  =
  \sum_{\eta\in\mathcal H_M}
  w_\eta C^\eta
  =
  \Theta_{\bm w}(C),
\end{align*}
where $\eta=\delta^{-1}\circ\gamma\circ\delta$. Applying
$\delta$ to both sides gives~Condition~\ref{cond:T-equivariant}.
Condition~\ref{cond:T-fixed} follows immediately by noticing that $\Pi^\gamma=\Pi$ and $M^\gamma=M$ for $\gamma \in \mathcal H_M$.
Finally, to show Condition~\ref{cond:T-self-adjoint}, note that separate affinity of $Q$ and $\sum_{\gamma\in\mathcal H_M}w_\gamma=1$ imply that $Q(\Theta_{\bm w}(C),D)
  =
  \sum_{\gamma\in\mathcal H_M}
  w_\gamma Q(C^\gamma,D)$ and $Q(C,\Theta_{\bm w}(D))=
  \sum_{\gamma\in\mathcal H_M}
  w_\gamma Q(C,D^\gamma)$.
  Consequently, Condition~\ref{cond:T-self-adjoint} holds by showing that $Q(C^\gamma,D)=Q(C,D^\gamma)$ for every $C,D\in \Cop$ and $\gamma\in \mathcal H_M$.
  This is trivially true for $\gamma=\mathrm{id}$. For the other three elements of $\mathcal H_M$, we can directly check as follows:
\begin{align*}
  Q(C^\pi,D)
  &=
  4\int_{[0,1]^2}C(v,u)\,\dd\mu_D(u,v)-1\\
&  =
  4\int_{[0,1]^2}C(u,v)\,\dd\mu_{D^\pi}(u,v)-1\\
&  =
  Q(C,D^\pi);\\
  Q(C^\varrho,D)
  &=
  4\int_{[0,1]^2}
  \bigl\{u+v-1+C(1-u,1-v)\bigr\}
  \,\dd\mu_D(u,v)-1
  \\
  &=
 4\int_{[0,1]^2}
  (u+v-1)
  \,\dd\mu_D(u,v)
  + 4\int_{[0,1]^2}
  C(1-u,1-v)
  \,\dd\mu_D(u,v)-1\\
  &= 0 + 
  4\int_{[0,1]^2}
  C(u,v)\,\dd\mu_{D^\varrho}(u,v)-1
  \\
  &=
  Q(C,D^\varrho),\\
  Q(C^{\pi\circ \varrho},D)
  &=
  4\int_{[0,1]^2}
  \bigl\{u+v-1+C(1-v,1-u)\bigr\}
  \,\dd\mu_D(u,v)-1
  \\
  &= 4\int_{[0,1]^2}
  (u+v-1)
  \,\dd\mu_D(u,v)
  +4\int_{[0,1]^2}
C(1-v,1-u)
  \,\dd\mu_D(u,v)-1\\
  &=0+
  4\int_{[0,1]^2}
  C(u,v)\,\dd\mu_{D^{\pi\circ \varrho}}(u,v)-1
  \\
  &=
  Q(C,D^{\pi\circ \varrho}).
\end{align*}

Conversely, suppose that $\Theta_{\bm w}$ satisfies
Conditions~\ref{cond:T-equivariant}--\ref{cond:T-self-adjoint}. Set $s
  =
  \sum_{\gamma\notin\mathcal H_M}w_\gamma$.
  Since $M^\gamma=W$ for $\gamma\in\mathcal D_4\setminus\mathcal H_M$, we have $\Theta_{\bm w}(M)
  =
  (1-s)M+sW$.
Condition~\ref{cond:T-fixed} and $M\neq W$ imply $s=0$. Since all
weights are non-negative, this is equivalent to $w_{\sigma_1}
  =
  w_{\sigma_2}
  =
  w_{\sigma_1\circ\pi}
  =
  w_{\pi\circ\sigma_1}
  =
  0$.
Hence $\Theta_{\bm w}(C)
  =
  \sum_{\gamma\in\mathcal H_M}w_\gamma C^\gamma$.

It remains to show $w_\pi=w_{\pi\circ \varrho}$.
Let $\delta\in\mathcal D_4$. Then Condition
\ref{cond:T-equivariant} gives
\begin{align*}
  \Theta_{\bm w}(C)
  =
  \{\Theta_{\bm w}(C^\delta)\}^{\delta^{-1}}
  =
  \sum_{\gamma\in\mathcal H_M}
  w_\gamma C^{\delta^{-1}\circ\gamma\circ\delta}
  =
  \sum_{\eta\in\mathcal H_M}
  w_{\delta\circ \eta\circ \delta^{-1}}C^\eta,
\end{align*}
where $\eta=\delta^{-1}\circ \gamma\circ \delta$. Therefore,
\begin{align*}
  \sum_{\gamma\in\mathcal H_M}
  w_\gamma C^\gamma
  &=
  \sum_{\gamma\in\mathcal H_M}
  w_{\delta\circ\gamma\circ\delta^{-1}}C^\gamma,
  \qquad
  C\in\Cop,\quad
  \delta\in\mathcal D_4.
\end{align*}
Taking $\delta=\sigma_1$, we obtain
\begin{align}
  (w_\pi-w_{\pi\circ \varrho})
  (C^\pi-C^{\pi\circ \varrho})
  &=
  0,
  \qquad
  C\in\Cop.
  \label{eq:diagonal-weight-separation}
\end{align}

For $C$ in~\eqref{eq:diagonal-weight-separation}, let
$C=C_\#$ be the $4\times4$ checkerboard copula with density
\begin{align*}
  c_\#(u,v)
  &=
  4\sum_{i=1}^4
  \id_{R_{i,p(i)}}(u,v),
  \qquad
  p=(p(1),\dots,p(4))=(1,3,4,2),
\end{align*}
where $R_{ij}=I_i\times I_j$, $i,j\in\{1,2,3,4\}$, and
$I_i=((i-1)/4,i/4]$.
Note that $\mu_{C_\#}$ assigns mass $1/4$ to each of
$R_{11}$, $R_{23}$, $R_{34}$, and $R_{42}$, and zero mass to all
other cells.
Since $\mu_{C_\#^\pi}(R_{24})=1/4$ and $\mu_{C_\#^{\pi\circ \varrho}}(R_{24})=0$, it follows from~\eqref{eq:diagonal-weight-separation} that
\begin{align*}
  0
  =
  (w_\pi-w_{\pi\circ \varrho})
  \left\{
    \mu_{C_\#^\pi}(R_{24})
    -
    \mu_{C_\#^{\pi\circ \varrho}}(R_{24})
  \right\}
  =
  \frac{w_\pi-w_{\pi\circ \varrho}}4.
\end{align*}
Thus $w_\pi=w_{\pi\circ \varrho}$. 
This completes the proof.
\end{proof}
\end{document}